\pgfplotsset{compat=1.14}
\pgfplotsset{
colormap/viridis/.style={
	colormap={viridis}{
	  rgb=(0.26700401,  0.00487433,  0.32941519)
	  rgb=(0.26851048,  0.00960483,  0.33542652)
	  rgb=(0.26994384,  0.01462494,  0.34137895)
	  rgb=(0.27130489,  0.01994186,  0.34726862)
	  rgb=(0.27259384,  0.02556309,  0.35309303)
	  rgb=(0.27380934,  0.03149748,  0.35885256)
	  rgb=(0.27495242,  0.03775181,  0.36454323)
	  rgb=(0.27602238,  0.04416723,  0.37016418)
	  rgb=(0.2770184 ,  0.05034437,  0.37571452)
	  rgb=(0.27794143,  0.05632444,  0.38119074)
	  rgb=(0.27879067,  0.06214536,  0.38659204)
	  rgb=(0.2795655 ,  0.06783587,  0.39191723)
	  rgb=(0.28026658,  0.07341724,  0.39716349)
	  rgb=(0.28089358,  0.07890703,  0.40232944)
	  rgb=(0.28144581,  0.0843197 ,  0.40741404)
	  rgb=(0.28192358,  0.08966622,  0.41241521)
	  rgb=(0.28232739,  0.09495545,  0.41733086)
	  rgb=(0.28265633,  0.10019576,  0.42216032)
	  rgb=(0.28291049,  0.10539345,  0.42690202)
	  rgb=(0.28309095,  0.11055307,  0.43155375)
	  rgb=(0.28319704,  0.11567966,  0.43611482)
	  rgb=(0.28322882,  0.12077701,  0.44058404)
	  rgb=(0.28318684,  0.12584799,  0.44496   )
	  rgb=(0.283072  ,  0.13089477,  0.44924127)
	  rgb=(0.28288389,  0.13592005,  0.45342734)
	  rgb=(0.28262297,  0.14092556,  0.45751726)
	  rgb=(0.28229037,  0.14591233,  0.46150995)
	  rgb=(0.28188676,  0.15088147,  0.46540474)
	  rgb=(0.28141228,  0.15583425,  0.46920128)
	  rgb=(0.28086773,  0.16077132,  0.47289909)
	  rgb=(0.28025468,  0.16569272,  0.47649762)
	  rgb=(0.27957399,  0.17059884,  0.47999675)
	  rgb=(0.27882618,  0.1754902 ,  0.48339654)
	  rgb=(0.27801236,  0.18036684,  0.48669702)
	  rgb=(0.27713437,  0.18522836,  0.48989831)
	  rgb=(0.27619376,  0.19007447,  0.49300074)
	  rgb=(0.27519116,  0.1949054 ,  0.49600488)
	  rgb=(0.27412802,  0.19972086,  0.49891131)
	  rgb=(0.27300596,  0.20452049,  0.50172076)
	  rgb=(0.27182812,  0.20930306,  0.50443413)
	  rgb=(0.27059473,  0.21406899,  0.50705243)
	  rgb=(0.26930756,  0.21881782,  0.50957678)
	  rgb=(0.26796846,  0.22354911,  0.5120084 )
	  rgb=(0.26657984,  0.2282621 ,  0.5143487 )
	  rgb=(0.2651445 ,  0.23295593,  0.5165993 )
	  rgb=(0.2636632 ,  0.23763078,  0.51876163)
	  rgb=(0.26213801,  0.24228619,  0.52083736)
	  rgb=(0.26057103,  0.2469217 ,  0.52282822)
	  rgb=(0.25896451,  0.25153685,  0.52473609)
	  rgb=(0.25732244,  0.2561304 ,  0.52656332)
	  rgb=(0.25564519,  0.26070284,  0.52831152)
	  rgb=(0.25393498,  0.26525384,  0.52998273)
	  rgb=(0.25219404,  0.26978306,  0.53157905)
	  rgb=(0.25042462,  0.27429024,  0.53310261)
	  rgb=(0.24862899,  0.27877509,  0.53455561)
	  rgb=(0.2468114 ,  0.28323662,  0.53594093)
	  rgb=(0.24497208,  0.28767547,  0.53726018)
	  rgb=(0.24311324,  0.29209154,  0.53851561)
	  rgb=(0.24123708,  0.29648471,  0.53970946)
	  rgb=(0.23934575,  0.30085494,  0.54084398)
	  rgb=(0.23744138,  0.30520222,  0.5419214 )
	  rgb=(0.23552606,  0.30952657,  0.54294396)
	  rgb=(0.23360277,  0.31382773,  0.54391424)
	  rgb=(0.2316735 ,  0.3181058 ,  0.54483444)
	  rgb=(0.22973926,  0.32236127,  0.54570633)
	  rgb=(0.22780192,  0.32659432,  0.546532  )
	  rgb=(0.2258633 ,  0.33080515,  0.54731353)
	  rgb=(0.22392515,  0.334994  ,  0.54805291)
	  rgb=(0.22198915,  0.33916114,  0.54875211)
	  rgb=(0.22005691,  0.34330688,  0.54941304)
	  rgb=(0.21812995,  0.34743154,  0.55003755)
	  rgb=(0.21620971,  0.35153548,  0.55062743)
	  rgb=(0.21429757,  0.35561907,  0.5511844 )
	  rgb=(0.21239477,  0.35968273,  0.55171011)
	  rgb=(0.2105031 ,  0.36372671,  0.55220646)
	  rgb=(0.20862342,  0.36775151,  0.55267486)
	  rgb=(0.20675628,  0.37175775,  0.55311653)
	  rgb=(0.20490257,  0.37574589,  0.55353282)
	  rgb=(0.20306309,  0.37971644,  0.55392505)
	  rgb=(0.20123854,  0.38366989,  0.55429441)
	  rgb=(0.1994295 ,  0.38760678,  0.55464205)
	  rgb=(0.1976365 ,  0.39152762,  0.55496905)
	  rgb=(0.19585993,  0.39543297,  0.55527637)
	  rgb=(0.19410009,  0.39932336,  0.55556494)
	  rgb=(0.19235719,  0.40319934,  0.55583559)
	  rgb=(0.19063135,  0.40706148,  0.55608907)
	  rgb=(0.18892259,  0.41091033,  0.55632606)
	  rgb=(0.18723083,  0.41474645,  0.55654717)
	  rgb=(0.18555593,  0.4185704 ,  0.55675292)
	  rgb=(0.18389763,  0.42238275,  0.55694377)
	  rgb=(0.18225561,  0.42618405,  0.5571201 )
	  rgb=(0.18062949,  0.42997486,  0.55728221)
	  rgb=(0.17901879,  0.43375572,  0.55743035)
	  rgb=(0.17742298,  0.4375272 ,  0.55756466)
	  rgb=(0.17584148,  0.44128981,  0.55768526)
	  rgb=(0.17427363,  0.4450441 ,  0.55779216)
	  rgb=(0.17271876,  0.4487906 ,  0.55788532)
	  rgb=(0.17117615,  0.4525298 ,  0.55796464)
	  rgb=(0.16964573,  0.45626209,  0.55803034)
	  rgb=(0.16812641,  0.45998802,  0.55808199)
	  rgb=(0.1666171 ,  0.46370813,  0.55811913)
	  rgb=(0.16511703,  0.4674229 ,  0.55814141)
	  rgb=(0.16362543,  0.47113278,  0.55814842)
	  rgb=(0.16214155,  0.47483821,  0.55813967)
	  rgb=(0.16066467,  0.47853961,  0.55811466)
	  rgb=(0.15919413,  0.4822374 ,  0.5580728 )
	  rgb=(0.15772933,  0.48593197,  0.55801347)
	  rgb=(0.15626973,  0.4896237 ,  0.557936  )
	  rgb=(0.15481488,  0.49331293,  0.55783967)
	  rgb=(0.15336445,  0.49700003,  0.55772371)
	  rgb=(0.1519182 ,  0.50068529,  0.55758733)
	  rgb=(0.15047605,  0.50436904,  0.55742968)
	  rgb=(0.14903918,  0.50805136,  0.5572505 )
	  rgb=(0.14760731,  0.51173263,  0.55704861)
	  rgb=(0.14618026,  0.51541316,  0.55682271)
	  rgb=(0.14475863,  0.51909319,  0.55657181)
	  rgb=(0.14334327,  0.52277292,  0.55629491)
	  rgb=(0.14193527,  0.52645254,  0.55599097)
	  rgb=(0.14053599,  0.53013219,  0.55565893)
	  rgb=(0.13914708,  0.53381201,  0.55529773)
	  rgb=(0.13777048,  0.53749213,  0.55490625)
	  rgb=(0.1364085 ,  0.54117264,  0.55448339)
	  rgb=(0.13506561,  0.54485335,  0.55402906)
	  rgb=(0.13374299,  0.54853458,  0.55354108)
	  rgb=(0.13244401,  0.55221637,  0.55301828)
	  rgb=(0.13117249,  0.55589872,  0.55245948)
	  rgb=(0.1299327 ,  0.55958162,  0.55186354)
	  rgb=(0.12872938,  0.56326503,  0.55122927)
	  rgb=(0.12756771,  0.56694891,  0.55055551)
	  rgb=(0.12645338,  0.57063316,  0.5498411 )
	  rgb=(0.12539383,  0.57431754,  0.54908564)
	  rgb=(0.12439474,  0.57800205,  0.5482874 )
	  rgb=(0.12346281,  0.58168661,  0.54744498)
	  rgb=(0.12260562,  0.58537105,  0.54655722)
	  rgb=(0.12183122,  0.58905521,  0.54562298)
	  rgb=(0.12114807,  0.59273889,  0.54464114)
	  rgb=(0.12056501,  0.59642187,  0.54361058)
	  rgb=(0.12009154,  0.60010387,  0.54253043)
	  rgb=(0.11973756,  0.60378459,  0.54139999)
	  rgb=(0.11951163,  0.60746388,  0.54021751)
	  rgb=(0.11942341,  0.61114146,  0.53898192)
	  rgb=(0.11948255,  0.61481702,  0.53769219)
	  rgb=(0.11969858,  0.61849025,  0.53634733)
	  rgb=(0.12008079,  0.62216081,  0.53494633)
	  rgb=(0.12063824,  0.62582833,  0.53348834)
	  rgb=(0.12137972,  0.62949242,  0.53197275)
	  rgb=(0.12231244,  0.63315277,  0.53039808)
	  rgb=(0.12344358,  0.63680899,  0.52876343)
	  rgb=(0.12477953,  0.64046069,  0.52706792)
	  rgb=(0.12632581,  0.64410744,  0.52531069)
	  rgb=(0.12808703,  0.64774881,  0.52349092)
	  rgb=(0.13006688,  0.65138436,  0.52160791)
	  rgb=(0.13226797,  0.65501363,  0.51966086)
	  rgb=(0.13469183,  0.65863619,  0.5176488 )
	  rgb=(0.13733921,  0.66225157,  0.51557101)
	  rgb=(0.14020991,  0.66585927,  0.5134268 )
	  rgb=(0.14330291,  0.66945881,  0.51121549)
	  rgb=(0.1466164 ,  0.67304968,  0.50893644)
	  rgb=(0.15014782,  0.67663139,  0.5065889 )
	  rgb=(0.15389405,  0.68020343,  0.50417217)
	  rgb=(0.15785146,  0.68376525,  0.50168574)
	  rgb=(0.16201598,  0.68731632,  0.49912906)
	  rgb=(0.1663832 ,  0.69085611,  0.49650163)
	  rgb=(0.1709484 ,  0.69438405,  0.49380294)
	  rgb=(0.17570671,  0.6978996 ,  0.49103252)
	  rgb=(0.18065314,  0.70140222,  0.48818938)
	  rgb=(0.18578266,  0.70489133,  0.48527326)
	  rgb=(0.19109018,  0.70836635,  0.48228395)
	  rgb=(0.19657063,  0.71182668,  0.47922108)
	  rgb=(0.20221902,  0.71527175,  0.47608431)
	  rgb=(0.20803045,  0.71870095,  0.4728733 )
	  rgb=(0.21400015,  0.72211371,  0.46958774)
	  rgb=(0.22012381,  0.72550945,  0.46622638)
	  rgb=(0.2263969 ,  0.72888753,  0.46278934)
	  rgb=(0.23281498,  0.73224735,  0.45927675)
	  rgb=(0.2393739 ,  0.73558828,  0.45568838)
	  rgb=(0.24606968,  0.73890972,  0.45202405)
	  rgb=(0.25289851,  0.74221104,  0.44828355)
	  rgb=(0.25985676,  0.74549162,  0.44446673)
	  rgb=(0.26694127,  0.74875084,  0.44057284)
	  rgb=(0.27414922,  0.75198807,  0.4366009 )
	  rgb=(0.28147681,  0.75520266,  0.43255207)
	  rgb=(0.28892102,  0.75839399,  0.42842626)
	  rgb=(0.29647899,  0.76156142,  0.42422341)
	  rgb=(0.30414796,  0.76470433,  0.41994346)
	  rgb=(0.31192534,  0.76782207,  0.41558638)
	  rgb=(0.3198086 ,  0.77091403,  0.41115215)
	  rgb=(0.3277958 ,  0.77397953,  0.40664011)
	  rgb=(0.33588539,  0.7770179 ,  0.40204917)
	  rgb=(0.34407411,  0.78002855,  0.39738103)
	  rgb=(0.35235985,  0.78301086,  0.39263579)
	  rgb=(0.36074053,  0.78596419,  0.38781353)
	  rgb=(0.3692142 ,  0.78888793,  0.38291438)
	  rgb=(0.37777892,  0.79178146,  0.3779385 )
	  rgb=(0.38643282,  0.79464415,  0.37288606)
	  rgb=(0.39517408,  0.79747541,  0.36775726)
	  rgb=(0.40400101,  0.80027461,  0.36255223)
	  rgb=(0.4129135 ,  0.80304099,  0.35726893)
	  rgb=(0.42190813,  0.80577412,  0.35191009)
	  rgb=(0.43098317,  0.80847343,  0.34647607)
	  rgb=(0.44013691,  0.81113836,  0.3409673 )
	  rgb=(0.44936763,  0.81376835,  0.33538426)
	  rgb=(0.45867362,  0.81636288,  0.32972749)
	  rgb=(0.46805314,  0.81892143,  0.32399761)
	  rgb=(0.47750446,  0.82144351,  0.31819529)
	  rgb=(0.4870258 ,  0.82392862,  0.31232133)
	  rgb=(0.49661536,  0.82637633,  0.30637661)
	  rgb=(0.5062713 ,  0.82878621,  0.30036211)
	  rgb=(0.51599182,  0.83115784,  0.29427888)
	  rgb=(0.52577622,  0.83349064,  0.2881265 )
	  rgb=(0.5356211 ,  0.83578452,  0.28190832)
	  rgb=(0.5455244 ,  0.83803918,  0.27562602)
	  rgb=(0.55548397,  0.84025437,  0.26928147)
	  rgb=(0.5654976 ,  0.8424299 ,  0.26287683)
	  rgb=(0.57556297,  0.84456561,  0.25641457)
	  rgb=(0.58567772,  0.84666139,  0.24989748)
	  rgb=(0.59583934,  0.84871722,  0.24332878)
	  rgb=(0.60604528,  0.8507331 ,  0.23671214)
	  rgb=(0.61629283,  0.85270912,  0.23005179)
	  rgb=(0.62657923,  0.85464543,  0.22335258)
	  rgb=(0.63690157,  0.85654226,  0.21662012)
	  rgb=(0.64725685,  0.85839991,  0.20986086)
	  rgb=(0.65764197,  0.86021878,  0.20308229)
	  rgb=(0.66805369,  0.86199932,  0.19629307)
	  rgb=(0.67848868,  0.86374211,  0.18950326)
	  rgb=(0.68894351,  0.86544779,  0.18272455)
	  rgb=(0.69941463,  0.86711711,  0.17597055)
	  rgb=(0.70989842,  0.86875092,  0.16925712)
	  rgb=(0.72039115,  0.87035015,  0.16260273)
	  rgb=(0.73088902,  0.87191584,  0.15602894)
	  rgb=(0.74138803,  0.87344918,  0.14956101)
	  rgb=(0.75188414,  0.87495143,  0.14322828)
	  rgb=(0.76237342,  0.87642392,  0.13706449)
	  rgb=(0.77285183,  0.87786808,  0.13110864)
	  rgb=(0.78331535,  0.87928545,  0.12540538)
	  rgb=(0.79375994,  0.88067763,  0.12000532)
	  rgb=(0.80418159,  0.88204632,  0.11496505)
	  rgb=(0.81457634,  0.88339329,  0.11034678)
	  rgb=(0.82494028,  0.88472036,  0.10621724)
	  rgb=(0.83526959,  0.88602943,  0.1026459 )
	  rgb=(0.84556056,  0.88732243,  0.09970219)
	  rgb=(0.8558096 ,  0.88860134,  0.09745186)
	  rgb=(0.86601325,  0.88986815,  0.09595277)
	  rgb=(0.87616824,  0.89112487,  0.09525046)
	  rgb=(0.88627146,  0.89237353,  0.09537439)
	  rgb=(0.89632002,  0.89361614,  0.09633538)
	  rgb=(0.90631121,  0.89485467,  0.09812496)
	  rgb=(0.91624212,  0.89609127,  0.1007168 )
	  rgb=(0.92610579,  0.89732977,  0.10407067)
	  rgb=(0.93590444,  0.8985704 ,  0.10813094)
	  rgb=(0.94563626,  0.899815  ,  0.11283773)
	  rgb=(0.95529972,  0.90106534,  0.11812832)
	  rgb=(0.96489353,  0.90232311,  0.12394051)
	  rgb=(0.97441665,  0.90358991,  0.13021494)
	  rgb=(0.98386829,  0.90486726,  0.13689671)
	  rgb=(0.99324789,  0.90615657,  0.1439362 )
	}
  }
}
\renewcommand{\todo}[2][]{\tikzexternaldisable\@todo[#1]{#2}\tikzexternalenable}
\renewcommand{\missingfigure}[2][]{\tikzexternaldisable\@missingfigure[#1]{#2}\tikzexternalenable}
\newtheoremstyle{indented}
  {3pt}
  {3pt}
  {\addtolength{\@totalleftmargin}{2em}
   \addtolength{\linewidth}{-2em}
   \parshape 1 2em \linewidth}
  {}
  {\bfseries}
  {.}
  {.5em}
  {}
\theoremstyle{indented}
\newtheorem{theorem}{Theorem}[section]
\newtheorem{lemma}[theorem]{Lemma}
\newtheorem{proposition}[theorem]{Proposition}
\newtheorem{definition}[theorem]{Definition}
\newtheorem{remark}[theorem]{Remark}
\newtheorem{assumption}[theorem]{Assumption}
\newcommand{\argmax}{\operatornamewithlimits{arg\,max}}
\newcommand{\eqdef}{:=}
\newcommand{\eqfed}{=:}
\newcommand{\R}{\mathbb{R}}
\newcommand{\Rnneg}{\mathbb{R}_{\geq 0}}
\newcommand{\N}{\mathbb{N}}
\newcommand{\E}{\mathbb{E}}
\newcommand{\eps}{\epsilon}
\newcommand{\interior}[1]{%
  {\kern0pt#1}^{\mathrm{o}}%
}
\newcommand{\process}[1]{(#1_t)_{t \geq 0}}
\newcommand{\ccontr}{\alpha}
\newcommand{\dcontr}{\beta}
\newcommand{\ccost}{F}
\newcommand{\dcost}{G}
\newcommand{\discRate}{\rho}
\newcommand{\cdProc}{X^{\ccontr, \dcontr}}
\newcommand{\cProc}{X^{\ccontr, 0}}
\newcommand{\dOp}{\mathcal{D}}
\newcommand{\cOp}{\mathcal{L}}
\newcommand{\iterOp}{\mathcal{T}}
\newcommand{\functionSpace}{\mathcal{X}}
\newcommand{\metric}{d}
\newcommand{\ruinTime}{\theta}
\newcommand{\pcost}{\lambda_p}
\newcommand{\fcost}{\lambda_f}
\newcommand{\filtration}{\mathcal{F}}
\newcommand{\xsigma}{\sigma}
\newcommand{\xW}{W}
\newcommand{\mumean}{\bar{\mu}}
\newcommand{\musigma}{\tilde{\sigma}}
\newcommand{\muW}{\tilde{W}}
\newcommand{\alphaMetric}{d_\alpha}
\newcommand{\cashflowGenerator}{\mathcal{A}}
\newcommand*{\wcc}{\skew{3}{\widehat}{\mathcal{C}}}
\def\cA{{\mathcal A}}
\def\eps{\epsilon}
\def\Om{\Omega}
\def\P{\mathbb{P}}
\def\1{\@ifnextchar[{\@indicatorset}{\@indicator}}
\def\@indicatorset[#1]{\mathbf{1}_{\{#1\}}}
\def\@indicator{\mathbf{1}}
\def\cA{{\mathcal A}}
\def\cC{{\mathcal C}}
\def\cI{{\mathcal I}}
\def\R{{\mathcal R}}
\def\E{\mathbb{E}}
\def\N{\mathbb{N}}
\def\P{\mathbb{P}}
\def\R{\mathbb{R}}
\newcommand{\ulf}{\phi_*}
\newcommand{\olf}{\phi^*}
\newcommand{\vzero}{v_{0}}
\newcommand{\vbzero}{\bar{v}_{0}}
\def\vn{\@ifnextchar[{\@vnopt}{\@vn}}
\def\@vnopt[#1]{v_{#1}}
\def\@vn{v_{n}}
\newcommand{\vbn}{\bar{v}_n}
\newcommand{\ctwo}{C^{1,2}(Q)}
\newcommand{\cinfty}{C^{\infty}(Q)}
\newcommand{\ccont}{C(\overline{Q})}
\numberwithin{equation}{section}
\title{Discrete dividend payments in continuous time}
\author{
	Jussi Keppo\thanks{NUS Business School and Institute of Operations Research and Analytics, National University of Singapore, email: \href{mailto:keppo@nus.edu.sg}{\texttt{keppo@nus.edu.sg}}. Partly supported by Institute of Operations Research and Analytics (National University of Singapore) grant WBS-R-726-000-009-646.}
	\and
    A.\ Max Reppen\thanks{ORFE Department, Princeton University, Princeton, NJ 08544, USA, email: \href{mailto:areppen@princeton.edu}{\texttt{areppen@princeton.edu}}. Supported by the Swiss National Science Foundation grant SNF 181815.}{ }\footnotemark[4]
    \and
    H.\ Mete Soner\thanks{ETH Z\"urich, Department of Mathematics, R\"amistrasse 101, 8092 Z\"urich, Switzerland, and Swiss Finance Institute, email: \href{mailto:mete.soner@math.ethz.ch}{\texttt{mete.soner@math.ethz.ch}}.}{ }\thanks{Partly supported by the ETH Foundation, the Swiss Finance Institute, and Swiss National Foundation grant SNF 200020\_172815.}
}
\date{\today}
\begin{document}

\maketitle

\begin{abstract}
\noindent
We propose a model in which dividend payments occur at regular, deterministic intervals in an otherwise continuous model.
This contrasts traditional models where either the payment of continuous dividends is controlled or the dynamics are given by discrete time processes.
Moreover, between two dividend payments, the structure allows for other types of control; we consider the possibility of equity issuance at any point in time.
The value is characterized as the fixed point of an optimal control problem with periodic initial and terminal conditions.
We prove the regularity and uniqueness of the corresponding dynamic programming equation,
and the convergence of an efficient numerical algorithm that we use to study the problem.
The model enables us to find the loss caused by infrequent dividend payments.
We show that under realistic parameter values this loss varies from around 1\% to 24\% depending on the state of the system, and that using the optimal policy from the continuous problem further increases the loss.
\end{abstract}

\section{Introduction}
Continuous time decision making is prevalent and of great importance, but some phenomena only occur at discrete intervals.
In models for asset trading, these intervals are typically sufficiently short to justify a continuous time model.
However, other types of events take place on larger time scales and thus weakens the basis for a continuous time approximation.
One such example is dividend payments that we study in this paper.
Variation of the dividend frequency typically ranges from monthly to annually, far less frequently than, for instance, trading on a large exchange.

Our approach for tackling this discrepancy between the models and practise is to consider a continuous time model in which dividends may only be paid out at predetermined, discrete time points.
This is reminiscent of models where the dividend payment times are discrete and random, as seen in \cite{avanzi2016interface}, but the mathematical structure is significantly different in the random and predetermined models.
Moreover, the distinction between our model and a traditional discrete time model lies in the possibility to model other continuous decision-making problems in-between the dividend payments.
In particular, we allow for the possibility to issue equity.
Since we do not wish to restrict equity issuance to predetermined time points---as would be the case in a discrete time model---we model this issuance as a continuous time control problem.
In other words, equity can be issued at any point in time.
Although our choice of continuous time control is equity issuance, the same methodology can also be used for other types of decisions and models, for instance capital investments.
Notwithstanding, the model at hand is not only a good example for the method, but our numerical results facilitates an interesting comparison to the continuous time counterpart.
More specifically, we show that under realistic parameter values the difference between the value function of discrete time dividends and the corresponding value function with continuous time dividends is often around 1--3\% relative to the continuous dividend case, but increases to 24\% depending on the state of the system.

Moreover, this type of structure does not only appear in problems with control decisions at discrete times.
In fact, problems in which monitoring occurs at discrete time points readily fit into the same framework.
One example of such a model is that of leveraged exchange traded funds (leveraged ETFs or LETFs).
The goal of an LETF is to track the returns of some index on some time scale---typically daily---by a predetermined multiple.
In \cite{LETFs}, the authors model this tracking problem by imposing a `monitoring condition' at the end of each trading day;
at the end of the day, it incurs a penalty depending how closely it tracks the underlying index.
From a structural point of view, the trading and transaction cost payment, happening in continuous time, is akin to the equity issuance, whereas the `monitoring' takes the role of the dividend payment (despite not being actively controlled).

Our main focus is to characterize the value function as the solution to a parabolic PDE with a fixed point structure.
To numerically find a solution, one needs to iteratively solve a related control problem without the discrete time element, i.e., without the dividends/monitoring.
In our model, we do not otherwise make any specific assumptions on the cash flow process other than that the cash flow process cannot be too large.
In particular, the results hold for both the commonly studied jump models, cf.~\cite{gerber1969entscheidungskriterien}, as well as their diffusion model counterparts, cf.\ e.g.\ \cite{jeanblanc1995,gerber2004optimal}.

In the context of dividend problems specifically, one common point of criticism of many optimal dividend problems is their irregular dividend payments when following the optimal policy.
One way to alleviate this is to consider dividend policies that are proportional or affine as a function of the current reserves, cf.~\cite{avanzi2012mean,albrecher2017risk}.
Although we do not explicitly consider any such models, they still fit naturally into the framework presented in this paper.
For further references on optimal dividend problems, we refer the reader to \cite{asmussen2010ruin, albrecher2009optimality}.

The structure of the paper is given as follows:
For the purpose of showcasing the main idea, we begin with a nonrigorous description of the general structure in Section~\ref{sec:generalStructure}.
We thereafter give an account of how an optimal dividend problem with continuous issuance and discrete dividends fit into this framework in Section~\ref{sec:dividends1D}.  We also prove a regularity
result for the one period problem with only equity issuance as control, which could be of interest.
In this context we present the structure of the main equation and describe a numerical scheme for finding its solution.
In Section~\ref{sec:dividends2D}, we extend the optimal dividend model and the convergence results to a multidimensional model in the spirit of \cite{reppen2017dividends}, but here with discretized dividend payments.
Numerical studies on the value and policy impact of dividend discretization is conducted in both settings.
Finally, Section~\ref{sec:conclusion} provides a summary of our findings along with our interpretations and conclusions.

\section{General structure}
\label{sec:generalStructure}

Although the focus of the paper is on optimal dividend problems, the core idea extends to a
wider class of problems and is best showcased in a general setting.
What follows in this section is a formal discussion of the ideas that will later be made
rigorous for two dividend problems.

We consider a specific type of infinite horizon (possibly singular) stochastic optimal
control problem with discounting.
What distinguishes these problems is that they, at regular, equidistant intervals,
involve a singular action and/or monitoring or a singular jump in the payoff function.
In this sense, the structure can be considered as a mix of continuous and discrete time control problems.

The structure of the problem can thus be separated into two components:
one for what happens at the discrete time points and one for what happens in-between.
For simplicity, we will represent these components by incremental operators that represent the
equations determining the solution across these time regions.

In particular, we allow for two controls $\ccontr$ and $\dcontr$
 that represent the continuous control and the discrete control respectively.
For a given choice of $\ccontr$ and $\dcontr$, we denote
by $\cdProc = \process{\cdProc}$ the state process corresponding to this choice.
It is here implicit that the process does not depend on $\dcontr$ in-between the discrete time points.
Similarly, we consider two types of cost structures: $\ccost^\ccontr_t = \ccost(\ccontr_t, \cdProc_t)$
for the cumulative (undiscounted) continuous cost and $\dcost^\dcontr_t = \dcost(\dcontr_t, \cdProc_{t-})$
for the cost incurred at the discrete points. Note that $\ccontr$ may be a singular control process.
Finally, let $T$ be the time between two of the discrete time points.

With this structure, we can write the control problem as
\[
	V(x) = \sup_{\ccontr, \dcontr}
	\E_x \bigg[ \int_0^\infty e^{-\discRate t} \dif{\ccost}^\ccontr_t +
	\sum_{n=0}^\infty e^{-\discRate n} \dcost^\dcontr_n\bigg],
\]
where $\E_x$ denotes expectation with respect to a measure under which
the state process starts at $x$ (before any control is activated), the
supremum is over some set of admissible controls, and $\discRate$ is the
discounting rate.
To proceed, we require the \emph{discrete time dynamic programming principle}
(DTDPP) to hold, i.e., dynamic programming at the discrete time points $t \in T\N = \{0, T, 2T, \dots \}$.
This and that the value function is universally measurable are established by
\citet{bertsekas_stochastic_1978}.

Now suppose there exists a space $\functionSpace$ such that the composition
of the following two `incremental' operators are well-defined.
First, the continuous operator is given by
\[
	\cOp \phi (x) = \sup_{\ccontr} \E_x \bigg[ \int_0^T e^{-\discRate t}
	\dif\ccost^\ccontr_t + e^{-\discRate T} \phi(\cProc_{T-}) \bigg].
\]
Second, the discrete operator is given by
\[
	\dOp \phi(x) = \sup_{\dcontr} \big( \phi(x + \dcontr) + \dcost(\dcontr, x) \big).
\]
In other words, there exists a space $\functionSpace$ such that $\dOp \circ \cOp :
\functionSpace \to \functionSpace$ or
$\cOp \circ \dOp : \functionSpace \to \functionSpace$.\footnote{Note that the
first operator in these compositions may map to an intermediate space.}
Without loss of generality, assume that it holds for $\iterOp \eqdef \dOp \circ \cOp$.
Note that the universal measurability of the value function $V$ makes the operators
well-defined on $V$, and the DTDPP states precisely that $V = \iterOp V$.
Our goal is to show that the value function can be found by iteratively applying $\iterOp$.
To do so, we seek a complete metric space $(\functionSpace, d)$ such that $\iterOp$ is
a strict contraction and $V \in \functionSpace$.
If such a space exists, $\iterOp$ will have a (unique) fixed point, provided the space is not empty.
Then, by the DTDPP, the value function is the fixed point, and
$\lim_{n \to \infty} (\dOp \circ \cOp)^n \phi = V$ for every
$\phi \in \functionSpace$.\footnote{In fact, this is in fact so-called
value iteration \cite{Bel}.}

In the rest of this paper, we will show how $\functionSpace$ and $\metric$ can be
chosen for the two optimal dividend problems.

\section{Discrete dividend payments with capital injections}
\label{sec:dividends1D}
This section is devoted to the optimal dividend problem for which the fixed point idea
from Section~\ref{sec:generalStructure} can be applied.
An equity capital constrained firm pays dividends to its shareholders at discrete,
predetermined time intervals.
The firm may also choose to issue equity at any point in time.

\subsection{Problem}
\label{ss.problem}

Consider a  firm endowed with some cash flow
that are placed in the firm's \emph{cash reserves}.
Dividends may be paid from these reserves periodically
at times $T,2T,\ldots$
until the time of ruin or bankruptcy.
Simultaneously, it can issue equity at any time
of its choice to increase its reserves and avoid ruin.
The aim of the firm is to maximize the discounted
value of dividends, net of capital injections.

To formulate the problem mathematically,
let $(\Om, \P)$ be a probability space
with a filtration $\filtration = \process{\filtration}$
satisfying the usual conditions \cite{protter}
and $W$ be a one-dimensional Brownian motion.
Let $x$ denote the initial cash reserve,
$L = \process{L}$ the cumulative dividends paid, and
$I = \process{I}$ the cumulative equity issued.
We refer to these two processes as dividend and issuance policies, strategies, and controls interchangeably.
We assume that  $(L,I)$ are RCLL (right-continuous with left-limits),
and are adapted to $\filtration$.
Then, the net cash reserves $X^\nu := \process{X^\nu}$
are given by,
\[
X^\nu_t :=x+C_t - L_t + I_t, \quad
{\text{with}} \quad
C_t= \mu t + \sigma W_t,
\]
where $\nu=(x,L,I)$ denotes the dependence
on these processes, given positive constants $\sigma$ and $\mu$, $\sigma$ is the volatility and $\mu$ is the profitability of the firm.
Note that $X^\nu$ is also RCLL and the initial
condition is interpreted as $X^\nu_{0-} = x$.
Similarly, we also think that $L_{0-}=I_{0-}=0$.

Dividends are paid from reserves periodically at times
$T,2T,\ldots$ until the
time of ruin,
\[
\ruinTime^\nu := \inf \{ t > 0 : X^\nu_t < 0 \}.
\]
The firm
closes its operations after ruin, and hence
all admissible dividend and issuance policies should satisfy
$\Delta L_t=\Delta I_t=0$ for all $t >\ruinTime^\nu$, where
for an RCLL process $Y$, $\Delta Y_t:= Y_t -Y_{t-}$.
Also, dividends must be fully covered by reserves of the firm.
This imposes the condition $\Delta L_t \leq X^\nu_{t-}$,
for all $t \geq 0$. In particular, as $\process{C}$ is continuous,
$X^\nu_{\ruinTime^\nu}=0$.  Additionally,
each non-zero issuance results in a fixed
cost, and thus there can  be at most finitely many
such actions in a finite time.  Hence,
for an admissible issuance process $I$,
the set $\{t \geq0 : \Delta I_t>0\}$ is countable.
We call  any pair of processes $(L,I)$ satisfying
these requirements  {\em{admissible}} and $ \cA_x$ is the set
of all admissible processes.

The aim of the firm is to maximize the discounted value of dividends, net of capital injections.
We follow \cite{decamps2011free} to model the cost of
equity issuance and with a given
discounting rate of $\discRate > 0$, fixed and
proportional issuance costs $\fcost > 0$ and $\pcost > 0$ respectively,
the value of the firm is given by
\[
V(x) := \sup_{(L,I)\in  \cA_x} J(\nu),\quad
	J(\nu):= \E_x \Bigg[
	\sum_{n=0}^\infty
	e^{-\discRate nT} \Delta L_t
	- \sum_{t \geq 0} e^{-\discRate t} c(\Delta I_t) \Bigg],
\]
where as before $\nu=(x,L,I)$ and   $c(\zeta):=	(\fcost + (1 + \pcost) \zeta) \1_{\{\zeta> 0\}}$,
$\E_x$ denotes expectation conditioned on $X^{\nu}_{0-} = x$.
For continuously paid dividends, this problem was first
formulated in \cite{decamps2011free}
and later further
studied in \cite{akyildirim2014optimal, reppen2017dividends}.

\begin{remark}
The strict positivity of $\pcost$ is used
{\em{only}} in the proof
Lemma \ref{p.time} below, to show that
any issuance is necessarily bounded.  Therefore,
our proof would also be valid for
models with $\pcost=0$ but the
issuance sizes are bounded
by a given constant.  This is the case
in our numerical examples.
In fact, more tedious analysis
yields the result
without any  assumption
on $\pcost$. However,
to simplify the presentation
we choose to assume $\pcost>0$.
 \end{remark}

\subsection{Fixed-point Structure}
\label{sec:periodization_1D}
For $\phi : \Rnneg \to \mathbb{R}_{\geq 0}$ and $x\ge 0$, set
$\cI(\phi)(x):= \sup_{\zeta > 0} \left(\phi(x+\zeta)-c(\zeta)\right)$.

\begin{definition}
\label{d.ca}
Define the following spaces.
\begin{itemize}[noitemsep, topsep=0pt]
    \item
        Let $\wcc_A$ be the set of  all continuous, non-decreasing
        functions $\phi : \Rnneg \to \mathbb{R}_{\geq 0}$ satisfying
        $(\cI(\phi)(x))^+ \le \phi(x) \le x+A $ and
        \begin{equation}
            \label{e.1}
            \phi(0) = \cI(\phi)(0) \vee 0 =: (\cI(\phi)(0))^+.
        \end{equation}
    \item Let $\cC_A$ be the set of all $\phi \in \wcc_A$
        such that $\varphi(x):=\phi(x)-x$ is bounded by $A$ and non-decreasing on $\Rnneg$.
        In particular, this implies that every $\phi \in \cC_A$  is
        uniformly continuous on $\Rnneg$ and
        $x \le \phi(x) \le x+A$.
\end{itemize}
\end{definition}

Following the procedure outlined in Section~\ref{sec:generalStructure},
define $\cOp$, $\dOp$, $\iterOp$ on $\wcc_A$ by
\begin{align}
	\label{eqn:cOp1D}
	\cOp\phi(x) &\eqdef  \sup_{I \in \widehat \cA_x}
	 \E_x\big( - \sum_{\mathclap{0 \le s <T}} e^{-\discRate s} c(\Delta I_s)
	+ e^{-\discRate T}  \phi(X^{\alpha}_{T}) \1_{\{T < \ruinTime^{\alpha}\}}\big),\\
	\label{eqn:dOp1D}
	\dOp\phi(x) &\eqdef  \sup_{0\le \ell \leq x} ( \phi(x - \ell) + \ell ),
\end{align}
where for $\alpha=(x,I)$, $X^\alpha:=X^{(x,0,I)}$,
$\ruinTime^\alpha= \ruinTime^{(x,0,I)}$, and $I\in \widehat \cA_x$ provided
that $(0,I)\in \cA_x$.  Set $\iterOp \eqdef \dOp \circ \cOp$.

Note that all above operators are monotone.
Also, for any constant $c$ and $\phi \in \wcc_A$,
$\dOp(c+\phi)(x) =c+\dOp \phi (x)$ and
$\cOp(c+\phi)(x) \le e^{-\discRate T} c +\cOp \phi(x)$.

\begin{theorem}
	\label{thm:convergence_1D} For each $A\ge 0$,
	$\dOp :\wcc_A \to \cC_A$.
	Furthermore, there exists $A^*>0$ such that for every $A \ge A^*$,
	$\cOp :\cC_A \to \wcc_A$ and
	$\iterOp : \cC_A \to \cC_A$ is a strict contraction.
\end{theorem}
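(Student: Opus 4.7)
The plan is to handle the three assertions in turn; the contraction on $\cC_A$ will follow by composition.

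For Part 1 ($\dOp : \wcc_A \to \cC_A$), I will first rewrite the sup using $y = x - \ell$ to obtain the representation
\[
\dOp\phi(x) = x + M(x), \qquad M(x) \eqdef \sup_{0 \le y \le x}(\phi(y) - y),
\]
from which almost everything is read off directly. Continuity and monotonicity of $M$ come from continuity of $\phi$; the bounds $0 \le M \le A$ (using $M(0) = \phi(0) \ge 0$ and $\phi(y) - y \le A$) yield $x \le \dOp\phi(x) \le x + A$, and non-decreasingness of $M$ gives the additional structure defining $\cC_A$. The $\cI$-inequality $(\cI(\dOp\phi)(x))^+ \le \dOp\phi(x)$ reduces to $M(x + \zeta) - M(x) \le \fcost + \pcost \zeta$, which I will verify by case analysis on the location of the maximizer; when it lies in $(x, x+\zeta]$, the bound follows from the hypothesis $(\cI(\phi)(\cdot))^+ \le \phi(\cdot)$ rewritten as an increment estimate on $\phi - \idR$. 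The boundary identity at $x = 0$ will follow from $\cI(\dOp\phi)(0) = \cI(\phi)(0)$: the $\ge$ direction is immediate from $\dOp\phi \ge \phi$, and the reverse uses that the supremum in $M(\zeta)$ is attained at some $y^* \le \zeta$ so $-\pcost\zeta \le -\pcost y^*$, combined with $\dOp\phi(0) = \phi(0) = (\cI(\phi)(0))^+$.

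For Part 2 ($\cOp : \cC_A \to \wcc_A$ for $A \ge A^*$), non-negativity is witnessed by $I \equiv 0$ and $\phi \ge 0$, and monotonicity by the pathwise coupling $X^{x',I} = X^{x,I} + (x' - x)$ together with monotonicity of $\phi$ and the later ruin time. Continuity is the separate regularity result for the one-period issuance problem announced in the introduction, used as a black box. The lower inequality $(\cI(\cOp\phi)(x))^+ \le \cOp\phi(x)$ comes from admissibility of first issuing $\zeta > 0$ at $t = 0$ and playing optimally thereafter. The boundary identity $\cOp\phi(0) = (\cI(\cOp\phi)(0))^+$ uses that at $x = 0$ any strategy that does not issue at $t = 0$ is ruined instantly by Brownian oscillation, reducing the admissible options to doing nothing (value $0$) or issuing $\zeta > 0$ at $t = 0$ (value $\cOp\phi(\zeta) - c(\zeta)$). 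The essential estimate is the upper bound $\cOp\phi(x) \le x + A$; starting from $\phi(y) \le y + A$ and introducing the truncation $\tilde X_t \eqdef X^\alpha_{t \wedge \ruinTime^\alpha} \ge X^\alpha_t$, I will use optional stopping on $\tilde X$ together with $c(\Delta I_s) \ge (1+\pcost)\Delta I_s$ to absorb $\E_x[I_{T \wedge \ruinTime^\alpha}]$ into the issuance-cost term, obtaining $\cOp\phi(x) \le e^{-\discRate T}(x + \mu T + A)$; this is at most $x + A$ for every $x \ge 0$ precisely when $A \ge A^* \eqdef \mu T / (e^{\discRate T} - 1)$.

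For Part 3, take $\metric(\phi, \psi) \eqdef \|\phi - \psi\|_\infty$, which is finite on $\cC_A$ since $|\phi - \psi| \le A$. Completeness of $(\cC_A, \metric)$ follows because continuity, monotonicity, the bounds $x \le \phi \le x + A$, and the $\cI$-conditions all pass to uniform limits (the map $\phi \mapsto \cI(\phi)(x)$ being $1$-Lipschitz in the sup norm). The operator $\dOp$ is non-expansive by the elementary inequality $|\sup f - \sup g| \le \sup|f - g|$, and for $\cOp$ the issuance-cost term cancels between $\cOp\phi$ and $\cOp\psi$ at a fixed control $I$, leaving
\[
|\cOp\phi(x) - \cOp\psi(x)| \le \sup_{I} \E_x\big[e^{-\discRate T}|\phi - \psi|(X^\alpha_T)\1_{T < \ruinTime^\alpha}\big] \le e^{-\discRate T}\|\phi - \psi\|_\infty.
\]
Composing yields $\metric(\iterOp\phi, \iterOp\psi) \le e^{-\discRate T}\metric(\phi, \psi)$, a strict contraction on the complete space $(\cC_A, \metric)$.

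The main obstacle is Part 2: continuity of $\cOp\phi$ rests on the separate regularity theorem, and the truncation-plus-optional-stopping accounting that identifies $A^*$ requires careful handling of the ruin time and of the interaction between the discounted cost sum and the undiscounted issuance total.
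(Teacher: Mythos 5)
Your proof is correct and follows the same overall architecture as the paper's: part one is handled by direct manipulation of the supremum defining $\dOp$; part two rests on the pathwise cost estimate $e^{-\discRate T}(x + \mu T + A)$, which yields the same threshold $A^* = \mu T (e^{\discRate T}-1)^{-1}$; and the contraction is proved by the same cancellation of the issuance-cost term at fixed $I$, leaving a $e^{-\discRate T}$ factor. You also correctly identify that continuity of $\cOp\phi$ must be imported from the separate one-period regularity result (Theorem~\ref{t.cont}), exactly as the paper does.

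There are two pleasant local variations worth noting. First, for $\dOp$ you introduce $M(x) = \sup_{0 \le y \le x}(\phi(y)-y)$ so that $\dOp\phi = \idR + M$; the paper instead works with the raw supremum and the change of variable $\ell \mapsto \ell - \zeta$ (resp.\ $\ell \mapsto \widehat\ell - h$). Both prove the same three facts ($c$-Lipschitz increment, $\ge h$ increment, preservation of bounds), but your reformulation reads off the membership in $\cC_A$ more transparently because the non-decreasing, $[0,A]$-valued function $\varphi = \phi - \idR$ is exactly the object being maximized. Second, for the boundary identity \eqref{e.1} for $\cOp\phi$, the paper defers to Theorem~\ref{t.cont} (which establishes it via the PDE boundary condition \eqref{e.pb}), whereas you argue probabilistically that at $x=0$ Brownian oscillation forces $\ruinTime = 0$ unless a jump of $I$ occurs at $t=0$, so that the only nontrivial options are immediate issuance or doing nothing. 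This is a sound alternative, though one should be slightly careful that "issue $\zeta$ at $t=0$ and then play optimally" can be composed with a possible further jump of the continuation strategy at $t=0$ — sub-additivity of $c$ handles that merging in the right direction. You also explicitly verify completeness of $(\cC_A, d)$, a point the paper leaves implicit; your observation that $\phi \mapsto \cI(\phi)(x)$ is $1$-Lipschitz in the sup norm is the right way to see that the $\cI$-constraints survive uniform limits.
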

\begin{proof}
	Fix $A \geq 0$.
	Set $\ulf(x) \eqdef x$ for $x \ge 0$, and $\olf \eqdef A +\ulf$.

	{\em{Step 1.}}
	Fix $\phi \in \wcc_A$ and $\zeta \ge 0$. Since
	$c(\zeta) \ge \zeta$,
	for any $\ell \in [\zeta, x+\zeta]$,
    \[
        \phi(x+\zeta-\ell)+\ell \le \phi(x -(\ell-\zeta)) +(\ell-\zeta) +c(\zeta)
        \le \dOp \phi(x) + c(\zeta).
    \]
	Since $\phi \in \wcc_A$,
	$\phi(x+\hat \zeta) \le \phi(x)+ c(\hat \zeta)$ for any $x, \hat \zeta \ge 0$.
	Then for an arbitrary $\ell \in [0,\zeta]$, set $\hat \zeta= \zeta -\ell$ and use this inequality to obtain
    \[
        \phi(x+\zeta -\ell)+\ell \le \phi(x)+c(\zeta-\ell)+ \ell
        \le \phi(x)+c(\zeta) \le \dOp \phi(x) + c(\zeta),\quad \forall \ell \in [0,\zeta+ x].
    \]
	The above two inequalities imply that $\dOp \phi(x+\zeta)
	\le \dOp \phi(x) + c(\zeta)$ for every $x, \zeta \ge 0$. Since $\dOp \phi \ge0$,
	we conclude that
	$\dOp \phi \ge \cI(\dOp \phi)^+$.
	As $\phi \in \wcc_A$, $\phi(0)= \cI(\dOp \phi)(0)^+$.
	Additionally,  $\dOp\phi(0) =\phi(0)$ for any $\phi$.
	Hence, $\dOp \phi(0)=\cI(\dOp \phi)(0)^+$.

	For $h\ge 0$,
	\begin{align*}
	\dOp \phi(x+h) & =
	\sup_{0 \le \widehat \ell \le x+h} (\phi(x+h-\widehat \ell) +\widehat \ell)
	= \sup_{-h \le \ell=\widehat \ell -h \le x} (\phi(x-\ell) +\ell) +h\\
	& \ge \sup_{0\le \ell \le x} (\phi(x-\ell) +\ell)+h
	= \dOp \phi(x)+h.
	\end{align*}
	Hence, $\dOp\phi(x)-x$ is monotone.  Also,
	it is clear that $\dOp \phi$ is continuous
	and $\dOp \phi \le \dOp \olf  = \olf$.
	So we have proved that $\dOp $ maps $\wcc_A$ into $\cC_A$.

	{\em{Step 2.}} Recall that for $x \ge 0$ and $I \in \widehat \cA_x$,
	$\ruinTime^\alpha$ is the exit time and $\alpha=(x,I)$.  Set
	$\tau^\alpha:= \ruinTime^\alpha \wedge T$.
	Then, when $\ruinTime^\alpha \le T$, $X^\alpha_{\tau^\alpha}
	=X^\alpha_{\ruinTime^\alpha}=0$.
	Hence, $X^\alpha_T \1_{\{T < \ruinTime^\alpha\}} = X^\alpha_{\tau^\alpha}$.
	Since $\zeta + \tilde \zeta \le c(\zeta+\tilde \zeta) \le c(\zeta)+c(\tilde \zeta)$
	for every
	$\zeta, \tilde \zeta \ge 0$, and since $\Delta I_t=0$ for every $t > \ruinTime^\alpha$,
	\begin{align*}
	- \sum_{\mathclap{0 \le s <T}} e^{-\discRate s} c(\Delta I_t)
	+ e^{-\discRate T} X^\alpha_{T} \1_{\{T < \ruinTime^\alpha\}}
	&\le
	- \sum_{\mathclap{0 \le s <T}} e^{-\discRate T} c(\Delta I_t)
	+ e^{-\discRate T} X^\alpha_{\tau^\alpha}\\
	&\le
	- \sum_{\mathclap{0 \le s <T}} e^{-\discRate T} \Delta I_t
	+ e^{-\discRate T} X^\alpha_{\tau^\alpha}\\
	&= e^{-\discRate T}\left[-  I_{\tau^\alpha} + X^\alpha_{\tau^\alpha}\right]
	= e^{-\discRate T}\left[x + C_{\tau^\alpha}\right].
	\end{align*}
	This implies that
	$\cOp \ulf (x)
		 \le \E_x e^{-\discRate T}\left[x + C_{\tau^\alpha} \right]
		 \le e^{-\discRate T}(x+\mu T)$.
	Therefore,
	\[
	\cOp \olf(x)= \cOp (A+\ulf)(x) \le e^{-\discRate T}A +
	\cOp(\ulf)(x)  \le e^{-\discRate T}(A + x+\mu T)
	\le  e^{-\discRate T}x +A,
	\]
	provided that $e^{-\discRate T}(A +\mu T) \le A$ which is equivalent to
	$A \ge A^*:= \mu T (e^{\discRate T}-1) ^{-1}$.
	Hence, $0\le \cOp \phi(x)\le  e^{-\discRate T}x+A$ for every
	$x \ge 0$ and $\phi \in \cC_A$ whenever $A \ge A^*$.

	For $x\ge 0$, by making an immediate
	issuance of size $\zeta > 0$, we conclude that
	$\cOp \phi (x) \ge \cOp \phi(x+\zeta)-c(\zeta)$.
	Hence, $\cOp \phi \ge \cI(\cOp\phi)$.
	Theorem \ref{t.cont} below proves that $\cOp \phi$ is
	continuous for every $\phi \in \cC_A$ and satisfies \eqref{e.1}.
	Hence,  $\cOp$ maps $\cC_A$ into $\wcc_A$
	for all $A \ge A^*$.

	{\em{Step 3.}}  Fix   $A \ge A^*$.  Then, $\iterOp= \dOp \circ \cOp
	: \cC_A \to \cC_A$.
	We continue by showing that $\iterOp$ is a strict contraction with
	the metric $d(\phi, \varphi) := \sup_{x \ge 0} |\phi(x) - \varphi(x)|$.
	Indeed, for any $\phi, \varphi \in \cC_A$, $d(\phi,\varphi) \le A$ and
	$\cOp(\phi,\varphi)(x) \le e^{-\rho T} d(\phi,\varphi)$.  Hence,
	\[
	\sup_{x \ge 0}(\iterOp \phi - \iterOp \varphi)(x)
	\leq \sup_{x \ge 0} \sup_{0\le \ell \leq x} \cOp (\phi-\varphi)(x-\ell)
	\le e^{-\rho T} d(\phi-\varphi).
	\]
	Therefore,  $d(\iterOp f, \iterOp g) \leq e^{-\discRate T} d(f, g)$
	and $\iterOp$ is a strict contraction.
\end{proof}

The following verification result is the
main characterization of the value function.
It also provides a computational method
provided that an efficient method
for the operator $\cOp$ is constructed.
However, we emphasize that
Theorem \ref{thm:convergence_1D} above
and consequently the below verification
theorem use crucially
the regularity of the operator
$\cOp$ that will be proved in the
next subsection in Theorem~\ref{t.cont}.

\begin{theorem}[Verification]
\label{t.verification}
The value function $V \in \cC_{A^*}$ is the unique fixed point of $\iterOp$.
\end{theorem}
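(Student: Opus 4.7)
The plan is to combine the discrete-time dynamic programming principle (DTDPP) with the contraction established in Theorem~\ref{thm:convergence_1D} via a monotone sandwich. For brevity, let $\psi_A(x) := x + A$, so that $\psi_A \in \cC_A$ for every $A \ge 0$.

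\emph{Step 1: DTDPP.} I would first establish $V = \iterOp V$ by decomposing any admissible pair $(L, I) \in \cA_x$ on $[0, T]$ into its initial dividend jump $\ell := \Delta L_0 \in [0, x]$, the issuance process on $[0, T)$ that, together with the post-dividend reserve $x - \ell$, determines $X^\alpha_T$ on the survival event $\{T < \ruinTime^\alpha\}$, and the continuation policy from time $T$ onward. Invoking the Markov property at $T$ together with the universal measurability of $V$ and a measurable selection of near-optimal continuation strategies (the Bertsekas--Shreve apparatus already used in Section~\ref{sec:generalStructure}) yields
\[
V(x) \;=\; \sup_{0 \le \ell \le x} \bigl[\cOp V(x - \ell) + \ell\bigr] \;=\; \iterOp V(x).
\]

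\emph{Step 2: a priori bounds and sandwich.} The lower bound $V(x) \ge \ulf(x) = x$ follows from the admissible strategy that pays $\ell = x$ at time $0$ and performs no further action. For the upper bound, apply It\^o's formula to $e^{-\rho t} X^\nu_t$, localize the Brownian stochastic integral, and take expectations on $[0, \ruinTime^\nu \wedge t]$; since $X^\nu \ge 0$ on $[0, \ruinTime^\nu]$ and $c(\zeta) \ge \zeta$, this gives $J(\nu) \le x + \mu/\rho$ as $t \to \infty$, so $V(x) \le \psi_B(x)$ with $B := \mu/\rho \ge A^*$. (A.s.\ finiteness of the total issuance, required to justify the localization, comes from Lemma~\ref{p.time}.) Monotonicity of $\iterOp$ together with $V = \iterOp V$ then gives, by induction,
\[
\iterOp^n \ulf(x) \;\le\; V(x) \;\le\; \iterOp^n \psi_B(x), \qquad n \ge 0,\ x \ge 0.
\]

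\emph{Step 3: conclusion.} Since $\ulf \in \cC_{A^*}$ and $\psi_B \in \cC_B$, Theorem~\ref{thm:convergence_1D} provides that $\iterOp$ is a strict contraction on each of the complete metric spaces $(\cC_{A^*}, \metric)$ and $(\cC_B, \metric)$. By Banach's theorem there exist unique fixed points $V^* \in \cC_{A^*}$ and $V^{**} \in \cC_B$ with $\iterOp^n \ulf \to V^*$ and $\iterOp^n \psi_B \to V^{**}$ uniformly. The inequality $\iterOp \psi_{A^*} \le \psi_{A^*}$ established in the proof of Theorem~\ref{thm:convergence_1D}, together with monotonicity, gives $\iterOp^n \psi_{A^*} \le \psi_{A^*}$ for every $n$; as the same sequence converges in $\cC_B$ to $V^{**}$, we get $V^{**} \le \psi_{A^*}$, so $V^{**} \in \cC_{A^*}$ and uniqueness in $\cC_{A^*}$ forces $V^{**} = V^*$. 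Passing to the limit in the sandwich of Step 2 yields $V = V^*$, which is therefore the unique fixed point of $\iterOp$ in $\cC_{A^*}$.

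The main obstacle is Step 1. The issuance process $I$ is a singular, RCLL, pure-jump control whose jump set is countable but may accumulate, and the survival truncation $\1_{\{T < \ruinTime^\alpha\}}$ must be handled jointly with a measurable selection of near-optimal continuation policies; this is precisely where the Bertsekas--Shreve framework is essential, rather than a direct verification via It\^o/PDE arguments. The subsequent sandwich and contraction steps are comparatively routine once the DTDPP and the a priori upper bound $V \le \psi_{\mu/\rho}$ are in hand.
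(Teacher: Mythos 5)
Your route is genuinely different from the paper's. The paper begins by invoking Banach to obtain the unique fixed point $\Phi \in \cC_{A^*}$, then unwinds $\Phi = \iterOp^N\Phi$ (using measurable selection) into an $N$-period stopped representation, and finally passes $N \to \infty$, using the bound $\Phi(x) \le x + A^*$ to kill the terminal term, to conclude $\Phi = V$. You instead take the DTDPP $V = \iterOp V$ as a primitive input (citing the Bertsekas--Shreve apparatus), derive the a priori bound $x \le V(x) \le x + \mu/\rho$ by an It\^o argument, and then trap $V$ between iterates that converge to the fixed point. Both proofs lean on the contraction of Theorem~\ref{thm:convergence_1D} and on measurable selection, but the logical direction is reversed: the paper goes from $\Phi$ to $V$, you go from $V$ to the fixed point. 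Your version is more modular in that it separates the analytic a~priori estimate from the contraction, and it is closer to the program sketched in Section~\ref{sec:generalStructure}; the paper's version has the advantage of not needing an independent It\^o bound, since the growth of $\Phi$ comes for free from membership in $\cC_{A^*}$.

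One technical imprecision should be flagged. You assert $\psi_B \in \cC_B$ and $\psi_{A^*} \in \cC_{A^*}$, but the affine functions $\psi_A(x) = x + A$ fail the boundary condition \eqref{e.1}: since $\cI(\psi_A)(0) = A - \fcost$ and $\fcost > 0$, one has $\psi_A(0) = A \neq (A-\fcost)^+$, so $\psi_A \notin \wcc_A$ for any $A > 0$. Consequently Banach's iteration cannot be started at $\psi_B$ (nor at $\psi_{A^*}$) inside the space. This is easily repaired, since the inequality $d(\iterOp\phi,\iterOp\varphi) \le e^{-\rho T} d(\phi,\varphi)$ requires only that $\phi - \varphi$ be bounded, not that $\phi,\varphi$ lie in $\cC_A$: indeed $\|\cOp\phi-\cOp\varphi\|_\infty \le e^{-\rho T}\|\phi-\varphi\|_\infty$ and $\|\dOp\phi-\dOp\varphi\|_\infty \le \|\phi-\varphi\|_\infty$ are immediate from the definitions. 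With this observation you can skip the two-space bookkeeping entirely: from $V = \iterOp V$, $\ulf \le V \le \psi_B$, and the fixed point $V^* \in \cC_{A^*}$, one has $d(V,V^*) \le B + A^* < \infty$, hence $d(V,V^*) = d(\iterOp V, \iterOp V^*) \le e^{-\rho T} d(V,V^*)$, which forces $V = V^*$.
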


\begin{proof}

Since $\iterOp$ is a strict contraction on $\cC_{A^*}$, it has a unique
fixed-point $\Phi \in \cC_{A^*}$.  The definitions of $\cOp$ and $\dOp$ imply that
\[
\Phi(x)= \sup_{(L,I)\in \cA_x}
\E_x \Big[ \Delta L_0 - \sum_{0 \le t \leq T} e^{-\discRate t} c(\Delta I_t)
+ e^{-\discRate T}\Phi(X^\alpha_T)\1_{\{ \theta^\alpha>T\}}\Big].
\]
Since $\Phi$ is continuous, standard selection theorems imply
that
\begin{align*}
\Phi(x)&= \sup_{(L,I)\in \cA_x}
\E_x \Big[ \Delta L_0 +e^{-\discRate T}\Delta L_T
- \sum_{\mathclap{0 \le t \leq 2T}} e^{-\discRate t} c(\Delta I_t)
+ e^{-\discRate T}\Phi(X^\alpha_{2T})\1_{\{ \theta^\alpha>2T\}}\Big]\\
&=\sup_{(L,I)\in \cA_x}\E_x \Big[ \sum_{n=0}^{N-1}
	e^{-\discRate nT} \Delta L_{nT}
	- \sum_{\mathclap{0 \le t \leq (N-1)T}} e^{-\discRate t} c(\Delta I_t)
	+ e^{-\discRate NT}\Phi(X^\alpha_{NT})
\1_{\{ \theta^\alpha> NT\}} \Big].
\end{align*}
We now use the upper bound $\Phi(x) \le a+A^*$ to pass
to the limit as $N$ tends to infinity to show that
$\Phi=V$.
\end{proof}

\subsection{One-period problem}
\label{ss.oneperiod}

In this subsection,
we prove the properties of the
operator $\cOp$ used in Theorem \ref{thm:convergence_1D}
and provide  a technique for its computation.
We fix  $A\ge A^*$, $\phi \in \cC_A$
and set $Q:= (0,T) \times (0,\infty)$.

The nonlinear operator $\cOp \phi$  corresponds
to an optimal issuance problem
with a given terminal data.  As we study this problem
by dynamic programming,
we need to consider it with an arbitrary maturity.
So for  $t \in [0,T]$ and $x \ge 0$, we define
\[
v(t,x):= \sup_{I \in \hat \cA_x} J(t,x,I; \phi), \quad
J(t,x,I; \varphi):= \E_x\big[ - \sum_{\mathclap{0\le u<t}} e^{-\rho u}c(\Delta I_u) +
e^{-\rho t} \varphi(X^\alpha_t)
\1_{\{ \theta^\alpha \ge t\}}\big],
\]
where  as before $\alpha=(x,I)$,
$X^\alpha_{s}= x+ \mu s + \sigma W_s + I_s$, and
$\theta^\alpha= \inf \{ s >0 : X^\alpha_{s}<0 \}$.  It is
clear that $v(T,x)= \cOp \phi (x)$.
\begin{definition}
\label{d.space}
Let $\cC^T_A$  be the set of all  functions
$u:\overline{Q} \to \R$
such that
the map $x \in \Rnneg \mapsto u(t,x)-e^{-\rho t} x$ is
non-decreasing
and for every $(t,x) \in \overline{Q}$,
\[
e^{-\rho t}x\le u(t,x) \le
e^{-\rho t}(x+A+\mu t).
\]
\end{definition}
Note that the monotonicity of $u(t,x)-e^{-\rho t} x$ is
equivalent to $\partial_x u(t,x) \ge e^{-\rho t }$
in the distributional sense.

\begin{theorem}\label{t.cont}
For $\phi \in \cC_A$, the value function $v$
is the unique function in the space
$v\in \cinfty \cap \ccont \cap \cC^T_A$
satisfying,
\begin{gather}
\label{e.pde}
\rho v(t,x) + (\partial_t - \mu \partial_x- \frac12 \sigma^2 \partial_{xx})v(t,x) =0,
\quad (t,x) \in Q, \\
\label{e.pb}
v(t,0)= (\cI(v(t,\cdot))(0))^+=\sup_{\zeta>0} \left( v(t,\zeta)- c(\zeta)\right)^{\mathrlap{+}},
\quad t \in [0,T], \\
\label{e.terminal}
v(0,x)=\phi(x), \quad x\ge 0.
\end{gather}
\end{theorem}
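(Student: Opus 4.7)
My plan proceeds in four steps: verify $v\in\cC^T_A$; prove continuity on $\overline{Q}$; derive~\eqref{e.pde}--\eqref{e.terminal} via the dynamic programming principle (DPP) and upgrade interior regularity to $\cinfty$; and prove uniqueness within the stated class.

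The two-sided bound defining $\cC^T_A$ follows by the same pathwise inequalities as in Step~2 of the proof of Theorem~\ref{thm:convergence_1D} ($c(\zeta)\ge\zeta$, $y\le\phi(y)\le y+A$, and coupling with the uncontrolled $x+C_s$). The monotonicity of $x\mapsto v(t,x)-e^{-\rho t}x$ follows from a coupling: a strategy used from larger initial reserves $x'>x$ enlarges the survival event and raises the terminal payoff by at most $e^{-\rho t}(x'-x)$, because $y\mapsto\phi(y)-y$ is non-decreasing. Spatial continuity uses the same coupling; temporal continuity is obtained by extending near-optimal strategies across a short time gap, using the moment bound on $\mu(t'-t)+\sigma(W_{t'}-W_t)$, the uniform continuity of $\phi$, and $\fcost>0$ to bound the expected number of impulses.

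The PDE and boundary conditions then follow from the DPP
\[
v(t,x)=\sup_{I}\E_x\Big[-\sum_{0\le u<\tau}e^{-\rho u}c(\Delta I_u)+e^{-\rho\tau}v(t-\tau,X^\alpha_{\tau})\1_{\{\theta^\alpha>\tau\}}\Big],
\]
valid for any stopping time $\tau\le t$. Standard viscosity arguments with short no-impulse continuations give both sub- and supersolution inequalities for~\eqref{e.pde} in $Q$, and~\eqref{e.pb} at $x=0$ reflects the dichotomy that the firm must either issue strictly positive capital (yielding $\sup_\zeta(v(t,\zeta)-c(\zeta))$) or become ruined (yielding $0$). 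Classical interior regularity for the constant-coefficient parabolic operator then upgrades $v$ to $\cinfty$ in $Q$.

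For uniqueness, let $u$ be another solution in the class and set $w=u-v$. Linearity and the shared terminal datum yield the Feynman--Kac representation $w(t,x)=\E_x[e^{-\rho\theta}w(t-\theta,0)\1_{\{\theta<t\}}]$ for $x>0$, where $\theta$ is the first time $x+\mu s+\sigma W_s$ hits zero. Whenever $u(t,0)>0$ or $v(t,0)>0$, comparing the two boundary suprema and using the argmax $\zeta^*(t)$ of whichever solution has positive boundary value gives $|w(t,0)|\le|w(t,\zeta^*(t))|$; combined with the explicit Laplace transform $\E_\zeta[e^{-\rho\theta}]=e^{-\kappa\zeta}$, where $\kappa>0$ is the positive root of $\tfrac{1}{2}\sigma^2\kappa^2-\mu\kappa-\rho=0$, one obtains $|w(t,0)|\le e^{-\kappa\zeta^*(t)}\sup_{s<t}|w(s,0)|$. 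When $u(t,0)=v(t,0)=0$, $w(t,0)=0$ trivially. Given a uniform lower bound $\zeta^*(t)\ge\delta_0>0$, a bootstrap starting from $w(0,0)=0$ forces $w(\cdot,0)\equiv0$, and hence $w\equiv0$ via the Feynman--Kac representation.

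The main obstacle is controlling the argmax. The upper bound on $\zeta^*(t)$ uses $\pcost>0$ together with the linear growth in $\cC^T_A$; the uniform positive lower bound $\delta_0$ comes from $\fcost>0$, since at any $t$ with positive boundary value the optimality identity $u(t,\zeta^*(t))=u(t,0)+\fcost+(1+\pcost)\zeta^*(t)$, combined with uniform continuity of $u$ near $\{x=0\}$ on compact sets (from $u\in\ccont$), rules out $\zeta^*(t)\to0^+$. Coordinating this case analysis across $u$, $v$, and the mixed case $u(t,0)=0\neq v(t,0)$ (where the argmax of the nontrivial solution suffices) is the key technical step.
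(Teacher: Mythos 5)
Your plan takes a genuinely different route from the paper. The paper never establishes regularity of the value function directly: it constructs a sequence $\{v_n\}$ by iterating a Picard-type scheme on the lateral boundary condition, proves uniform modulus-of-continuity estimates for the iterates (Lemma~\ref{l.zero}, Proposition~\ref{p.iterate}, Lemma~\ref{l.boundary}, Proposition~\ref{p.time}), passes to the limit, and then uses verification to identify the limit with the value function; uniqueness is also via verification, using that $\fcost>0$ yields a uniform lower bound on issuance sizes and hence admissibility of the constructed policy. You instead work with the value function directly, proving bounds and continuity probabilistically, deriving the PDE through the DPP, and proving uniqueness by a Feynman--Kac bootstrap. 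Your uniqueness argument is correct and is an elegant alternative to the paper's verification: the inequality $|w(t,0)|\le|w(t,\zeta^*(t))|$ obtained by comparing the two boundary suprema, combined with $\E_\zeta[e^{-\rho\theta}]=e^{-\kappa\zeta}$ and a uniform lower bound $\zeta^*(t)\ge\delta_0>0$ (which does follow from $\fcost>0$ and $u\in\ccont$, uniformly on the compact domain), gives $|w(t,0)|\le e^{-\kappa\delta_0}\sup_{s<t}|w(s,0)|$, and the sup-bootstrap from $w(0,\cdot)=0$ then kills $w$. Note the paper's max-principle trick with $w(t,x)=u(t,x+\zeta)-c(\zeta)$ already shows that $u\ge\cI(u)$ on $\overline{Q}$ for any solution of \eqref{e.pde}--\eqref{e.terminal}, so your case analysis is well posed.

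There is, however, a genuine gap in your existence step. The DPP and ``short no-impulse continuations'' give the quasi-variational inequality $\min\{\rho v+\cL v,\,v-\cI(v)\}=0$, not the linear equation \eqref{e.pde}: the no-impulse continuation only yields the supersolution inequality $\rho v+\cL v\ge 0$. To conclude $\rho v+\cL v=0$ in all of $Q$, you need to know that the issuance action region does not intrude into $\{x>0\}$, i.e., that optimal issuance happens only at the boundary $\{x=0\}$. This structural fact is not a consequence of the a~priori bounds in $\cC^T_A$ (those only give $v-\cI(v)\ge \fcost-e^{-\rho t}(A+\mu t)$, which need not be positive), and it is not addressed in your sketch. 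Your subsequent step of ``upgrading'' to $\cinfty$ also relies on knowing that $v$ solves the linear equation, so the gap propagates. The paper avoids this issue entirely: each iterate $v_n$ is defined as a classical solution of the linear PDE~\eqref{e.pde}, the locally uniform limit inherits this property, and verification then goes from the PDE solution to the value function rather than the other way around. To close your argument you would either need to prove that issuance at $x>0$ is strictly suboptimal (a statement that requires a genuine argument, e.g., showing that delaying issuance until the reserves hit zero strictly improves the payoff due to the discounted fixed cost), or restructure the existence step along the lines of the paper's iterated construction.
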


We prove the existence of a solution by an iterative construction that converges to the solution.
To prove that the limit is indeed the solution, we first establish a number of regularity
results for the constructed sequence.
The proof of Theorem \ref{t.cont} is given later in this section, after these
 constructions and intermediate results.
The following definition simplifies the presentation.

\begin{definition}
\label{d.modulus}
A {\em{modulus}} (of continuity) is an increasing,
function $m: \Rnneg \to \Rnneg$, continuous at zero, and
with $m(0)=0$.
\end{definition}

Since $\phi \in \cC_A$, $0 \le \varphi:=\phi(x)-x \le A$ and $\varphi$
is increasing.  Therefore, $\varphi$ and consequently $\phi$ are
uniformly continuous.  Consequently, there exists a modulus
$m_\phi$ such that $|\phi(x)-\phi(y)| \le m_\phi(|x-y|)$
for every $x,y \ge 0$. Also, since $|\phi(x)-\phi(y)| \le A$
for every $x,y \ge 0$, we take $m_\phi \le A$.
For future reference, we set
\[
    m_\phi^*(t):= \E [m_\phi(\mu t + \sigma |W_t|)],\quad t \ge 0.
\]
It is clear that $m_\phi^*$ is also a modulus.

Let $\vzero$ be the solution of \eqref{e.pde}, \eqref{e.terminal}
with $\vzero(t,0)=0$ for all $t \in [0,T]$. By standard parabolic regularity
theory $\vzero \in \cinfty \cap \cC(\overline{Q}\setminus \{(0,0)\})$.
Let $X^x_s:= X_s^{(x,0)}$ and $\theta^x:= \theta^{(x,0)}$.  Then,
\[
\vzero(t,x)= \E_x \big[ e^{-\rho t} \phi(X^x_t) \1_{\{ \theta^x \ge t\}}\big],
\quad (t,x) \in \overline{Q}.
\]

\begin{lemma}
\label{l.zero}
 $\vzero \in \cinfty \cap \cC(\overline{Q}\setminus \{(0,0)\})\cap \cC^T_A$ and
 for every $\zeta_*>0$, there exits a modulus $m_0(\cdot; \zeta_*)$
such that
\[
\big| e^{\rho t }\vzero(t,x) -\phi(x)\big| \le m_0(t;\zeta_*), \quad
0\le t \le T, x \ge \zeta_*.
\]
In particular, the map $t \mapsto (\cI(\vzero(t,\cdot))(0))^+$
is continuous on $[0,T]$.
\end{lemma}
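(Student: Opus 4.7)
The plan is to exploit the Feynman--Kac representation $\vzero(t,x) = e^{-\rho t}\E[\phi(X^x_t)\1_{\{\theta^x\ge t\}}]$, with $X^x_s = x+\mu s+\sigma W_s$, and extract every claim by direct probabilistic estimates. Smoothness on $Q$ will come from standard interior parabolic regularity applied to the uniformly parabolic linear equation \eqref{e.pde}. To show $\vzero\in\cC^T_A$, I will use $\phi(y)\le y+A$ combined with the identity $\E[X^x_t\1_{\{\theta^x\ge t\}}] = \E[X^x_{t\wedge\theta^x}] = x+\mu\E[t\wedge\theta^x]\le x+\mu t$ (which uses $X^x_{\theta^x}=0$) for the upper bound; the lower bound follows by using $\phi(y)\ge y$ in the same identity; the monotonicity of $x\mapsto \vzero(t,x)-e^{-\rho t}x$ is obtained by coupling $X^x$ and $X^{x'}$ to a common Brownian motion, using that $\varphi = \phi-\idR$ is non-negative and non-decreasing.

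For the modulus estimate, I will decompose
\[
e^{\rho t}\vzero(t,x)-\phi(x) = \E\bigl[(\phi(X^x_t)-\phi(x))\1_{\{\theta^x\ge t\}}\bigr] - \phi(x)\,\P(\theta^x<t).
\]
The first term is bounded in absolute value by $\E[m_\phi(|X^x_t-x|)]\le m_\phi^*(t)$, independent of $x$. For the second, since $\mu\ge 0$ we have $\inf_{s\le t}X^x_s\ge x+\sigma\min_{s\le t}W_s$, so the reflection principle gives $\P(\theta^x<t)\le \P(|W_t|>x/\sigma)$, which decays like $\exp(-x^2/(2\sigma^2 t))$. Combined with $\phi(x)\le x+A$, this shows that $\phi(x)\P(\theta^x<t)$ is uniformly bounded on $x\ge\zeta_*$ by a quantity $\eta(t;\zeta_*)\to 0$ as $t\to 0$, and $m_0(t;\zeta_*) := m_\phi^*(t)+\eta(t;\zeta_*)$ is then the desired modulus. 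Continuity of $\vzero$ on $\overline{Q}\setminus\{(0,0)\}$ then follows: at $(0,x_0)$ with $x_0>0$ from the modulus with $\zeta_*=x_0/2$ and continuity of $\phi$; at a lateral point $(t_0,0)$ with $t_0>0$ from the bound $\vzero(t,x)\le e^{-\rho t}(\E[X^x_{t\wedge\theta^x}]+A\P(\theta^x\ge t))$, both summands vanishing as $x\to 0$ (since $\theta^0=0$ a.s.).

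Finally, to prove continuity of $f(t) := (\cI(\vzero(t,\cdot))(0))^+$, the crude bound $\vzero(t,\zeta)-c(\zeta)\le -\lambda_p\zeta + A+\mu T - \lambda_f$ shows that the defining supremum in $\zeta$ is effectively over a bounded interval, uniformly in $t$, so continuity of $f$ on $(0,T]$ follows from the joint continuity of $\vzero$ established above. At $t_0=0$, $f(0)=\phi(0)$ by \eqref{e.1}. For the $\liminf$, I will fix any $\zeta_0>0$ and use the pointwise convergence $\vzero(t,\zeta_0)\to\phi(\zeta_0)$ to obtain $\liminf_{t\to 0}f(t)\ge (\phi(\zeta_0)-c(\zeta_0))^+$, then pass to the supremum in $\zeta_0$ to get $\liminf\ge(\cI(\phi)(0))^+=\phi(0)$. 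For the $\limsup$, I will split the inner supremum: on $\zeta\ge\zeta_*$ the modulus estimate yields $\vzero(t,\zeta)-c(\zeta)\le\phi(\zeta)-c(\zeta)+m_0(t;\zeta_*)\le\phi(0)+m_0(t;\zeta_*)$; on $\zeta<\zeta_*$, monotonicity of $\vzero(t,\cdot)$ gives $\vzero(t,\zeta)-c(\zeta)\le \vzero(t,\zeta_*)-\lambda_f\le \phi(\zeta_*)-\lambda_f+m_0(t;\zeta_*)$, which drops below $\phi(0)$ for $\zeta_*$ small enough and $t$ small enough.

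The principal obstacle is the continuity of $f$ at $t=0$: the limits $\zeta\to 0^+$ and $t\to 0^+$ do not commute, since $\vzero(t,0)=0$ for $t>0$ while $\vzero(0,0)=\phi(0)$ may be strictly positive. Strict positivity of the fixed cost $\lambda_f$ is essential, since it forces any near-optimal $\zeta$ in $\cI(\vzero(t,\cdot))(0)$ to stay bounded away from $0$ uniformly in $t$, which in turn makes the modulus estimate applicable.
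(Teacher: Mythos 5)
Your proof follows the same overall decomposition as the paper's: the Feynman--Kac representation, the split $e^{\rho t}\vzero(t,x)-\phi(x) = \E[(\phi(X^x_t)-\phi(x))\1_{\{\theta^x\ge t\}}]-\phi(x)\P(\theta^x<t)$ bounded by $m_\phi^*(t)$ plus a tail term, the observation that $\lambda_f>0$ bounds the effective issuance sizes, and the upper/lower semicontinuity analysis of $(\cI(\vzero(t,\cdot))(0))^+$ at $t=0$. Your $\liminf$ argument (take $\sup_{\zeta_0}$ after passing to the limit) is a clean variant that handles $\phi(0)=0$ uniformly, and your $\limsup$ via a $\zeta\gtrless\zeta_*$ split is a valid alternative to the paper's one-line estimate $\vzero(t,\zeta)\le\E[\phi(\zeta+\mu t+\sigma|W_t|)]$, which is sharper because it holds uniformly in $\zeta>0$ and avoids the split.

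There is one genuine gap: your coupling argument for the monotonicity of $x\mapsto\vbzero(t,x)-x$ is incomplete. Writing $x'=x+h$ and coupling to a common Brownian motion, the fact that $\varphi=\phi-\idR$ is non-decreasing only controls the contribution on $\{\theta^x\ge t\}$, giving
\[
\vbzero(t,x+h)-\vbzero(t,x)\ge h\,\P(\theta^x\ge t)+\E\big[\phi(X^{x+h}_t)\1_{\{\theta^x<t\le\theta^{x+h}\}}\big],
\]
and the second term is \emph{not} directly bounded below by $h\,\P(\theta^x<t)$ from pointwise considerations alone, since $X^{x+h}_t$ can be arbitrarily small (though positive) on that event. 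To close the gap you must apply the strong Markov property at $\theta^x$, note $X^{x+h}_{\theta^x}=h$, and then invoke the already-established lower bound $\vbzero(s,y)\ge y$ to get $\E[\1_{\{\theta^x<t\}}\vbzero(t-\theta^x,h)]\ge h\,\P(\theta^x<t)$. This works, but is considerably more involved than you let on. The paper sidesteps this entirely by the parabolic maximum principle applied to $\bar w(t,x):=\vbzero(t,x+h)-\vbzero(t,x)-h$, which is a solution of the same linear PDE with nonnegative boundary and terminal data; that argument is shorter and does not depend on the lower bound having been proved first.
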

\begin{proof}
    Set $\vbzero(t,x):= e^{\rho t }\vzero(t,x)$.  Then, $\vbzero$ solves
\begin{equation}
\label{e.vb}
\cOp\vbzero(t,x):= (\partial_t -\mu \partial_x- \frac12 \sigma^2 \partial_{xx})\vbzero(t,x) =0,
\quad (t,x) \in Q.
\end{equation}

{\em{Step 1.}}
Set $w(t,x):= x+A+\mu t$.
Then, $w$ is a solution of \eqref{e.vb} and
as $\phi \in \cC_A$, $w(0,x) \ge \phi(x)=\vbzero(0,x)$.
Moreover, $w(t,0) \ge 0=\vbzero(t,0)$.  Then,
by maximum principle, $w \ge \vbzero$
on $\overline{Q}$. Similarly,  $u(t,x):=x$ is a
sub-solution of \eqref{e.vb} and $u(0,x) \le \phi(x)=\vbzero(0,x)$,
$u(t,0)=0=\vbzero(t,0)$.  Again by maximum
principle, $u \le \vbzero$ on $\overline{Q}$.

For $h>0$, set $\bar{w}(t,x):= \vbzero(t,x+h)-\vbzero(t,x)-h$.
Then, $\bar{w}(t,0)=\vbzero(t,h)-h = (\vbzero -u)(t,h) \ge 0$.
Also, since $\phi \in \cC_A$, $\bar{w}(0,x)= \phi(x+h)-\phi(x)-h \ge 0$.
Since $\bar{w}$ solves \eqref{e.vb},
we conclude that $\bar{w} \ge 0$
on $\overline{Q}$.  Hence, the map
$x \in \Rnneg \mapsto \vbzero(t,x) -x $ is increasing.

{\em{Step 2.}}
By the representation of $\vzero$,
\begin{align*}
|\vbzero(t,x)-\phi(x)|&
 \le \E_x [
|\phi(X^x_{t})-\phi(x)| \1_{\{ \theta^x \ge t\}}
+\phi(x) \1_{\{ \theta^x < t\}}]\\
&\le  \E_x[ m_\phi(X^x_{t}-x) \1_{\{ \theta^x \ge t\}}] +
\phi(x) \P (\theta^x < t).
\end{align*}
For any $\zeta_*>0$,
 $\hat{m}(t;\zeta_*):= \sup_{x \ge \zeta_*} \phi(x)
\P (\theta^x < t) $ is a modulus.  Hence,
\begin{align*}
|\vbzero(t,x)-\phi(x)|&
\le  \E_x[ m_\phi(\mu t + \sigma |W_{t}|) ] +
\phi(x) \P (\theta^x< t)\\
& \le m^*_\phi(t) +
\hat{m}(t;\zeta_*)=:m_0(t;\zeta_*),
\quad \forall\, x \ge \zeta_*.
\end{align*}
As both $m^*_\phi$ and $\hat{m}(\cdot;\zeta_*)$
are moduli, so is $m_0(\cdot;\zeta_*)$.

{\em{Step 3.}}  Since $\vzero(t,x) \le e^{-\rho t}(x+A+\mu t)$
and $c(\zeta) \ge \zeta$, for every $h > 0$ there is
$\zeta_h>0$ such that
\[
    \cI(\vzero(t,0))(0) = \sup_{0<\zeta \le \zeta_h} \vzero(t,\zeta) -c(\zeta),
    \quad \forall t \in [h,T].
\]
Because $\vzero$ is uniformly continuous on $[h,T]\times [0,\zeta_h]$
for any $h>0$,
we conclude that $t \mapsto \cI(\vzero(t,\cdot))(0)$ is
continuous on $(0,T]$.

{\textit{Step 4.}}
Next we prove the continuity at the origin.
Indeed, for any $t>0$ and $\zeta>0$,
$\vzero(t,\zeta)
\le \E_{\zeta} [ e^{-\rho t} \phi((X^{\zeta}_{t})^+)]
\le \E[ \phi(\zeta+\mu t+\sigma |W_{t}|)]$
and  $\phi(0) \ge \phi(\zeta ) +c(\zeta)$. Hence,
\[
\vzero(t,\zeta)-c(\zeta)-\phi(0)
\le  \E[ \phi(\zeta+\mu t+\sigma |W_{t}|)] -\phi(\zeta)
\le \E [  m_\phi(\mu t+\sigma |W_{t}|)]
 =m_\phi^*(t).
\]
Therefore,
$\cI(\vzero(t,\cdot))(0) =\sup_{\zeta >0} (\vzero(t,\zeta)-c(\zeta))
\le \phi(0)+m_\phi^*(t)$.
Since $\phi \ge 0$, we conclude that
$\limsup_{t\downarrow 0} (\cI(\vzero(t,\cdot))(0))^+ \le \phi(0)$.

{\em{Step 5.}}  Suppose $\phi(0)>0$. Then, $\phi(0)=\cI(\phi)(0)$.
For $\eps>0$, choose $\zeta_\eps >0$ such that
$\phi(0) \le \phi(\zeta_\eps)-c(\zeta_\eps)+ \eps$.
Thus,
\[
\liminf_{t \downarrow 0} \cI(\vzero(t,\cdot))(0)
\ge \lim_{t \downarrow 0} \vzero(t,\zeta_\eps)-c(\zeta_\eps)
= \phi(\zeta_\eps)-c(\zeta_\eps) \ge \phi(0)-\eps.
\]
The above estimate and Step 4 imply that
$\lim_{t\downarrow 0} (\cI(\vzero(t,\cdot))(0))^+=\phi(0)$.
\end{proof}

The following result is the iterative procedure.
\begin{proposition}
\label{p.iterate}
For $n\ge 1$, there are $\vn \in \cinfty \cap \ccont \cap \cC^T_A$
solving  the parabolic
equation \eqref{e.pde}, terminal data \eqref{e.terminal}, and
 the lateral boundary condition,
 \begin{equation}
 \label{e.boundary}
 \vn(t,0)=(\cI(v_{n-1}(t,\cdot))(0))^+, \quad t \in [0,T].
 \end{equation}
The unique solution satisfies
$\vn(t,x) \ge \cI(v_{n-1}(t,\cdot))(x)$ and
the representation
\begin{equation}
\label{e.fk}
\vn(t,x)= \E_x \big[ e^{-\rho t } \phi(X_t) \1_{\{ \theta^x \ge t\}}
+ e^{-\rho \theta^x} \vn(t-\theta^x,0) \1_{\{ \theta^x< t\}} \big],\quad
n=0,1,\ldots.
\end{equation}
Moreover for every $\zeta_*>0$, there exits a
modulus $m(\cdot; \zeta_*)$
such that
\[
\sup_{n\ge 0}\, \sup_{x \ge \zeta_*}\, \big| e^{\rho t}\vn(t,x) -\phi(x)\big| \le m(t;\zeta_*), \quad
\forall  t \in [0,T].
\]

\end{proposition}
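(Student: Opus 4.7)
The plan is to argue by induction on $n \ge 1$, using Lemma \ref{l.zero} for the base case $n=0$.  Assuming the claim for $v_{n-1}$, the inductive step decomposes into four tasks: (i) existence and regularity of $v_n$, (ii) the Feynman--Kac formula \eqref{e.fk}, (iii) the inequality $v_n \ge \cI(v_{n-1}(t,\cdot))$ and membership in $\cC^T_A$, and (iv) the uniform modulus estimate.

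For (i) I would first verify that $t \mapsto (\cI(v_{n-1}(t,\cdot))(0))^+$ is continuous on $[0,T]$, mirroring Steps 3--5 of Lemma \ref{l.zero}: the inductive bound $v_{n-1}(t,\zeta) \le e^{-\rho t}(\zeta + A + \mu t)$ combined with $c(\zeta) \ge \lambda_f + (1+\lambda_p)\zeta$ confines the supremum in $\cI(v_{n-1}(t,\cdot))(0)$ to a bounded interval in $\zeta$, whereupon continuity of $v_{n-1}$ suffices; the compatibility $v_n(0,0)=\phi(0)$ is guaranteed by \eqref{e.1}. Standard parabolic regularity theory then yields a unique $v_n \in \cinfty \cap \ccont$ solving \eqref{e.pde}, \eqref{e.boundary}, \eqref{e.terminal}. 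For (ii), applying It\^o's formula to $e^{-\rho s} v_n(t-s, X^x_s)$ on $[0, \theta^x \wedge t]$ eliminates the drift via the PDE, and the boundary condition at $\{x=0\}$ supplies $v_n(t-\theta^x,0)$ on $\{\theta^x < t\}$, producing \eqref{e.fk}.

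For (iii), the inequality $v_n(t,x) \ge \cI(v_{n-1}(t,\cdot))(x)$ follows by comparison: for each $\zeta > 0$, $w_\zeta(t,x) := v_{n-1}(t, x+\zeta) - c(\zeta)$ solves the same PDE, its terminal data $\phi(x+\zeta)-c(\zeta)$ is dominated by $\phi(x)$ (since $\phi \in \wcc_A$), and its boundary data $v_{n-1}(t,\zeta) - c(\zeta)$ is bounded by $v_n(t,0)$ by definition; the parabolic max principle gives $w_\zeta \le v_n$, and taking the supremum over $\zeta$ proves the claim. The two bounds in $\cC^T_A$ are similar comparisons, with $e^{-\rho t}x$ as a sub-solution and $e^{-\rho t}(x+A+\mu t)$ as a solution; for the boundary of the latter, the inductive bound on $v_{n-1}$ together with $c(\zeta) \ge \lambda_f + \zeta$ yields $v_{n-1}(t,\zeta) - c(\zeta) \le e^{-\rho t}(A + \mu t) - \lambda_f$, which suffices.

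I expect the monotonicity of $x \mapsto v_n(t,x) - e^{-\rho t} x$ to be the main obstacle, because the direct comparison argument applied to $\psi(t,x) := v_n(t,x+h) - v_n(t,x) - e^{-\rho t} h$ requires $\psi(t,0) \ge 0$, which is not immediate when the lateral data is nontrivial. I would bootstrap as follows: using the just-proved inequality together with the inductive monotonicity of $v_{n-1}(t,\cdot) - e^{-\rho t}(\cdot)$, for $\eps > 0$ pick $\zeta_\eps > 0$ with $v_{n-1}(t,\zeta_\eps) - c(\zeta_\eps) \ge v_n(t,0) - \eps$ (the case $v_n(t,0) = 0$ is immediate from the $\cC^T_A$ lower bound already established); then for $x > 0$,
\[
v_n(t,x) \ge v_{n-1}(t, x+\zeta_\eps) - c(\zeta_\eps) \ge v_{n-1}(t,\zeta_\eps) + e^{-\rho t} x - c(\zeta_\eps) \ge v_n(t,0) - \eps + e^{-\rho t} x.
\]
Sending $\eps \to 0$ yields $\psi(t,0) \ge 0$, and the max principle then gives $\psi \ge 0$ on $\overline{Q}$. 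Finally, for (iv), splitting \eqref{e.fk} into the two indicator terms, the first is controlled as in Step 2 of Lemma \ref{l.zero} by $m_\phi^*(t) + \hat{m}(t;\zeta_*)$, while the second, after multiplication by $e^{\rho t}$, is at most $(A+\mu T)\P(\theta^x < t)$ by the uniform (in $n$) bound $v_n(s,0) \le e^{-\rho s}(A + \mu s)$; both quantities are moduli in $t$ uniformly in $n$ and in $x \ge \zeta_*$, which closes the induction.
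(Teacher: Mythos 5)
Your proposal is correct and follows essentially the same strategy as the paper: induction on $n$, parabolic regularity for existence of $v_n$ once the lateral data is shown continuous (mimicking Steps 3--5 of Lemma~\ref{l.zero}), Feynman--Kac/It\^o for the representation~\eqref{e.fk}, maximum-principle comparisons for the bounds and the inequality $v_n \ge \cI(v_{n-1}(t,\cdot))$, and a modulus estimate along the lines of Step~2 of Lemma~\ref{l.zero}. The one place you are more explicit than the paper is the monotonicity of $x \mapsto v_n(t,x)-e^{-\rho t}x$: you correctly observe that the lateral data $(\cI(v_{n-1}(t,\cdot))(0))^+$ is not zero, so the argument of Step~1 of Lemma~\ref{l.zero} (which used $\bar v_0(t,0)=0$) does not transfer verbatim, and you supply the needed bootstrap via $v_n\ge\cI(v_{n-1})$ and the inductive monotonicity of $v_{n-1}$; the paper refers to that step of Lemma~\ref{l.zero} without spelling this out, so your version is a useful clarification rather than a genuinely different route.
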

\begin{proof}
We complete the proof in several steps.

{\em{Step 1.}}
In  view of the Lemma \ref{l.zero}, $\cI(\vzero(t,\cdot))(0)^+$
is continuous.  Then, by parabolic regularity theory,
there exists a unique
function $v_1 \in \cinfty \cap \ccont$ solving  the parabolic
equation \eqref{e.pde}, terminal data \eqref{e.terminal}, and
the lateral boundary condition $v_1(t,0)=\cI(\vzero(t,\cdot))(0)^+$.
The representation \eqref{e.fk} for $n=1$ follows
from the regularity of $v_1$.  As in Step 1 of the proof of
Lemma \ref{l.zero}, set $w(t,x):= x+A+ \mu t$.
Then, for every $\zeta >0$,
\[
\vzero(t,\zeta) -c(\zeta) \le e^{-\rho t}(\zeta +A + \mu t) -c(\zeta)
\le e^{-\rho t}(A + \mu t).
\]
Hence, $v_1(t,0) = \cI(\vzero(t,\cdot))(0)^+ \le e^{-\rho t} w(t,0)$.
We now proceed exactly as  in Step 1 of Lemma \ref{l.zero},
to prove that $v_1 \in \cC^T_A$.

{\em{Step 2.}}
Suppose that for $n\ge 1$, $\vn \in \cinfty \cap \ccont \cap \cC^T_A$ and
 the representation \eqref{e.fk} holds for $n$.
Then, we proceed as in Step 2 of the proof of Lemma \ref{l.zero}
using the inequality $|e^{\rho t}\vn(t,0)-\phi(x)| \le x+A+\mu T$
to obtain the following for $t \in [0,T]$ and $x \ge \zeta_*$,
\begin{align*}
|e^{\rho t }\vn(t,x)-\phi(x)|&
\le  \E_x[ m_\phi(\mu t + \sigma |W_{t}|) ] +
\sup_{x \ge \zeta_*} (x+A+\mu T) \P (\theta^x < t)\\
& \le m_\phi^*(t) +
\sup_{x \ge \zeta_*} (x+A+\mu T) \P (\theta^x < t)=:m(t;\zeta_*).
\end{align*}
It is clear  that $m(\cdot;\zeta_*)$
is a modulus.

{\em{Step 3.}}
We follow the Steps 3, 4 and 5 of  Lemma \ref{l.zero}
{\em{mutadis mutandis}}  to
show that
$(\cI(\vn(t,\cdot))(0))^+$ is continuous
on $[0,T]$.

We now use the parabolic regularity theory
to conclude that there exists a unique solution $v_{n+1} \in \cinfty \cap \ccont$
solving \eqref{e.pde}, \eqref{e.terminal} and \eqref{e.boundary}.  Also
the representation \eqref{e.fk} for $n+1$ follows from the regularity.

{\em{Step 4.}}  Fix $\zeta>0$ and set $u(t,x):= v_{n-1}(t,x+\zeta)-c(\zeta)$.
We directly verify that $u(0,\cdot) \le \phi=\vn(0,\cdot)$
and $u(\cdot,0)\le \vn(\cdot,0)$.  As $u$ solves
\eqref{e.pde}, we conclude that $u \le \vn$ on $\overline{Q}$.
Since $\zeta>0$ is arbitrary, $\cI(v_{n-1}(t,\cdot))(x) \le \vn(t,x)$.
 Finally, the argument used in the first step
 to prove that $\vn[1] \in \cC^T_A$ applies
directly to show that $v_{n+1} \in \cC^T_A$.
Then, we complete the proof of this proposition by induction.
\end{proof}

Let $\hat \cA_x^n$ be the set of all $I \in \hat \cA_x$
that have at  most $n$ issuances.
Since $\vn$ is smooth and $\vn(t,x) \ge \cI(\vn[n-1](t,\cdot))(x)$,
by a standard verification theorem,
we obtain
the following representation of $\vn$,
\[
\vn(t,x) =\sup_{I \in \hat \cA_x^n} J(t,x,I; \phi).
\]

\begin{lemma}
\label{l.boundary}
There exist a modulus $\widehat m$ such that
for $s \in [0,T]$,
\[
\sup_{n \ge 1}
\big| e^{\rho t} \vn(t,x)- \phi(x)\big| \le \widehat{m}(t),
\quad \forall (t,x) \in \overline{Q}.
\]
\end{lemma}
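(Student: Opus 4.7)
The plan is to extend the modulus from Proposition~\ref{p.iterate}---which controls $|e^{\rho t}\vn(t,x)-\phi(x)|$ only for $x \ge \zeta_*$---uniformly down to $x=0$. The missing ingredient is a modulus, independent of $n$, for $e^{\rho t}\vn(t,0)-\phi(0)$; once available, the representation \eqref{e.fk} transports it to every $x \ge 0$ after letting $\zeta_*$ shrink with $t$. The main obstacle will be the lower bound at the origin when $\phi(0)>0$, because Lemma~\ref{l.zero}'s modulus $m_0(t;\zeta_*)$ blows up as $\zeta_* \downarrow 0$: the argmax $\zeta^*$ of $\phi(\zeta)-c(\zeta)$ must be bounded away from the origin, and this is precisely where $\lambda_f>0$ enters---it forces $\limsup_{\zeta \downarrow 0}[\phi(\zeta)-c(\zeta)] = \phi(0)-\lambda_f < \phi(0)$, so the supremum is attained at some $\zeta^* \in [\delta_0,Z]$ depending only on $\phi$.

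\textbf{Step 1 (uniform boundary modulus).} I show $|e^{\rho t}\vn(t,0)-\phi(0)| \le \beta(t)$ for all $n\ge 1$ with $\beta$ a modulus. For the upper bound, start from $e^{\rho t}\vn(t,0) = (\sup_{\zeta>0}[e^{\rho t}\vn[n-1](t,\zeta) - e^{\rho t}c(\zeta)])^+$ and split at $\zeta_*$. For $\zeta \ge \zeta_*$, Proposition~\ref{p.iterate} gives $e^{\rho t}\vn[n-1](t,\zeta) \le \phi(\zeta) + m(t;\zeta_*)$, and combined with $-e^{\rho t}c(\zeta) \le -c(\zeta)$ and $\cI(\phi)(0) \le \phi(0)$ yields a bound of $\phi(0)+m(t;\zeta_*)$. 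For $\zeta \in (0,\zeta_*)$, monotonicity of $\vn[n-1](t,\cdot)$ (since $\vn[n-1]\in \cC^T_A$), the bound $-e^{\rho t}c(\zeta)\le-\lambda_f$, and the inequality $\phi(\zeta_*)\le \phi(0)+c(\zeta_*)$ (from $\phi \in \wcc_A$) together bound the contribution by $\phi(0)+(1+\lambda_p)\zeta_*+m(t;\zeta_*)$, with the $\lambda_f$ cancelling. For the lower bound, I use that $(\vn)_{n\ge 0}$ is non-decreasing in $n$ (by induction: $\vn(t,0) = \cI(\vn[n-1])(t,0)^+ \ge \cI(\vn[n-2])(t,0)^+ = \vn[n-1](t,0)$, then maximum principle), so $\vn \ge \vn[1]$. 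If $\phi(0)=0$ then $e^{\rho t}\vn(t,0)\ge 0$ is trivial; if $\phi(0)>0$, then with $\zeta^* \in [\delta_0,Z]$ as above,
\[
e^{\rho t}\vn[1](t,0) \ge e^{\rho t}\vzero(t,\zeta^*)-e^{\rho t}c(\zeta^*) \ge \phi(\zeta^*)-m_0(t;\delta_0)-e^{\rho t}c(\zeta^*) = \phi(0)-m_0(t;\delta_0)-(e^{\rho t}-1)c(\zeta^*),
\]
using $\phi(\zeta^*)-c(\zeta^*)=\phi(0)$. Taking, e.g., $\zeta_*(t)=t^{1/4}$ in the upper bound---so that $(1+\lambda_p)\zeta_*(t)\to 0$ and $m(t;\zeta_*(t))\to 0$ because $\P(\theta^{\zeta_*(t)}<t)$ decays faster than any power of $t$---one obtains the modulus $\beta$.

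\textbf{Step 2 (extension to general $x$).} Starting from the representation \eqref{e.fk} and decomposing $e^{\rho(t-\theta^x)}\vn(t-\theta^x,0)-\phi(x) = [e^{\rho(t-\theta^x)}\vn(t-\theta^x,0)-\phi(0)] + [\phi(0)-\phi(x)]$, Step~1 and $|\phi(0)-\phi(x)|\le m_\phi(x)$ give
\[
|e^{\rho t}\vn(t,x)-\phi(x)| \le m_\phi^*(t) + (\beta(t)+m_\phi(x))\,\P(\theta^x<t).
\]
Taking again $\zeta_*(t)=t^{1/4}$ and splitting into $x\ge \zeta_*(t)$ (so $\P(\theta^x<t)\le \P(\theta^{\zeta_*(t)}<t)$ is super-polynomially small) and $x<\zeta_*(t)$ (so $m_\phi(x)\le m_\phi(\zeta_*(t))\to 0$) yields the desired modulus $\widehat m(t)$, made non-decreasing by replacing it with $\sup_{s\le t}\widehat m(s)$ if needed, completing the proof.
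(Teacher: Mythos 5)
Your proof is correct, and it hits the same two crucial ideas as the paper's argument --- the representation \eqref{e.fk} and the fact that $\lambda_f>0$ bounds the argmax of $\phi(\cdot)-c(\cdot)$ away from the origin --- but the route is genuinely different. You first build a uniform boundary modulus $\beta(t)$ for $\lvert e^{\rho t}\vn(t,0)-\phi(0)\rvert$ and then transport it to all $x$ via \eqref{e.fk}; to control the $\zeta\in(0,\zeta_*)$ part of the boundary supremum you are forced to let $\zeta_*(t)\to 0$ with $t$ (e.g.\ $\zeta_*(t)=t^{1/4}$), which in turn requires knowing that $\P(\theta^{\zeta_*(t)}<t)$ decays super-polynomially, and you also invoke the monotonicity of $(\vn)_n$ to reduce the lower bound to $v_1$ and $\vzero$. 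The paper proceeds differently: for the \emph{upper} bound it never touches the boundary condition at all; it uses the control representation $\vn(t,x)=\sup_{I\in\hat\cA^n_x}J(t,x,I;\phi)$ together with the super-additivity $\sum c(\Delta I_u)\ge c(I_{\theta^\alpha\wedge t})$ and $\phi\ge\cI(\phi)$ to get $e^{\rho t}\vn(t,x)-\phi(x)\le m_\phi^*(t)$ directly for \emph{all} $x$, with no $\zeta_*$ to tune. For the lower bound the paper works from \eqref{e.fk} exactly as you do, but it keeps $\zeta_*$ \emph{fixed} (depending only on $\phi$ through $\lambda_f$) and feeds $v_{n-1}$ into Proposition~\ref{p.iterate}'s modulus $m(t;\zeta_*)$, rather than reducing to $n=1$. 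The trade-off: the paper's super-additivity argument is shorter and avoids the delicate $\zeta_*(t)\to 0$ step and the associated tail estimate; your two-stage ``boundary modulus then transport'' structure is more modular and would generalize to settings where the control representation is not as convenient, at the cost of needing the explicit ruin-probability decay rate. Both are sound.
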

\begin{proof}
Set $\vbn(t,x):= e^{\rho t}\vn(t,x)$.

Fix $t \in [0,T]$, $x \ge 0$.
On $\{\theta^\alpha <t \}$, $X^\alpha_{\theta^\alpha}=0$
and $\Phi(0) \ge 0$.  Therefore,
$\phi(X^{\alpha}_t) \1_{\{ \theta^\alpha \ge t\}}\le
\phi(X^{\alpha}_{\theta^\alpha \wedge t})$
and
$e^{\rho t} J(t,x,I;\phi) \le
 \E_x[ - \sum_{0\le u<t} e^{\rho (t-u) }c(\Delta I_u) +
\phi(X^{\alpha}_{\theta^\alpha \wedge t})]$.
As $c$ is super-additive and positive,
\[
\sum_{0\le u<t} e^{\rho (t-u) }c(\Delta I_u)
\ge \sum_{0\le u<t} c(\Delta I_{u})
\ge c(\sum_{0\le u<t}\Delta I_u)
= c(I_{\theta^\alpha \wedge t}).
\]
Moreover, because $\phi \ge \cI(\phi)$,
\[
\phi(X^\alpha_{\theta^\alpha \wedge t}) - c(I_{\theta^\alpha \wedge t})
\le \phi((X^\alpha_{\theta^\alpha \wedge t}- I_{\theta^\alpha \wedge t})^+)
\le \phi(x+\mu ({\theta^\alpha \wedge t})
+\sigma |W_{\theta^\alpha \wedge t}|).
\]
Recall that $m_\phi^*(t):= \E[ m_\phi(\mu t +\sigma |W_t|)]$.
We  combine
these inequalities to arrive at the following estimate,
\[
e^{\rho t} J(t,x,I;\phi)  -\phi(x)\le
\E_x\big[\phi(x+\mu ({\theta^\alpha \wedge t})
+\sigma |W_{\theta^\alpha \wedge t}|) -\phi(x)\big]
\le m_\phi^*(t).
\]
Therefore, $\vbn(t,x)- \phi(x) \le m_\phi^*(t)$.

We continue by using \eqref{e.fk} which is equivalent to
\[
\vbn(t,x)= \E_x \big[\phi(X_t) \1_{\{ \theta^x \ge t\}}
+\vbn(t-\theta^x,0) \1_{\{ \theta^x< t\}} \big].
\]
Since on $\{ \theta^x <t\}$, $\phi(X_{{\theta^x \wedge t}}^x)=\phi(0)$,
\begin{align*}
\phi(x) -\vbn(t,x) &= \phi(x)-
\E_x[
\phi(X^x_t)\1_{\{ \theta^x \ge t\}}+
 \vbn(t-\theta^x,0)\1_{\{ \theta^x<t\}}]\\
 &= \phi(x)-
\E_x[
\phi(X^x_{\theta^x\wedge t})+
(\vbn(t-\theta^x,0)- \phi(0))\1_{\{ \theta^x<t\}}]\\
&= \E_x[
(\phi(x) - \phi(X_{{\theta^x \wedge t}}^x))+
\big(\phi(0) - \vbn(t-\theta^x,0)\big)\1_{\{ \theta^x<t\}}].
\end{align*}
If $\phi(0)=0$, then
$\phi(x) -\vbn(t,x) \le \E_x [ |\phi(x) - \phi(X^x_{\theta^x \wedge t})|] \le m_\phi^*(t)$.
Suppose $\phi(0)>0$.
Then, $\phi(0)= \cI(\phi)(0)^+=\cI(\phi)(0)$.
Recall that $c(\zeta)=\lambda_f+(1+\lambda_p)\zeta$
with $\lambda_f>0$ and $\phi$ is continuous.  Hence, there is
$\zeta_*>0$ such that $\phi(0)=\cI(\phi)(0)= \sup\{ \phi(\zeta)-c(\zeta)
:\zeta \ge \zeta_*\}$.
For $\eps>0$, choose $\zeta_\eps \ge \zeta_*$ such that
$\phi(0) \le \phi(\zeta_\eps)-c(\zeta_\eps)+ \eps$.
Therefore,  we have
$\vn(t-\theta^x,0) \ge
v_{n-1}(t-\theta^x, \zeta_\eps)
-c(\zeta_\eps)$.  By Proposition \ref{p.iterate},
on $\{\theta^x<t\}$ we have
\[
\phi(0)-\vn(t-\theta^x,0)
\le \phi(\zeta_\eps) - v_{n-1}(t-\theta^x, \zeta_\eps) +\eps
\le m(t-\theta^x;\zeta_*)+\eps
\le m(t;\zeta_*)+\eps.
\]
Therefore,
\[
\phi(0)-\vbn(t-\theta^x,0) =e^{\rho t}(\phi(0)-\vn(t-\theta^x,0)) +(e^{\rho t}-1) \phi(0)
\le e^{\rho T} m(t;\zeta_*) + A(e^{\rho t}-1).
\]
Hence, in both cases
$\phi(x) -\vbn(t,x) \le m_\phi^*(t)+ [e^{\rho T} m(t;\zeta_*) + A(e^{\rho t}-1)]=: \widehat{m}(t)$.
\end{proof}

\begin{proposition}
\label{p.time}
There exists a  modulus $\tilde{m}(\cdot)$
such that
\[
\sup_{n \ge 1} \big| \vn(t,x) -  \vn(t+h,x) \big|
\le (1+x) \tilde{m}(h),
\quad \forall t \in [0,T-h], x \ge 0.
\]
\end{proposition}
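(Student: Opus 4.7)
The plan is to use the control representation $\vn(t,x) = \sup_{I \in \hat\cA_x^n} J(t,x,I;\phi)$ together with the uniform-in-$n$ estimate of Lemma~\ref{l.boundary}, and to prove the two one-sided inequalities separately. As a preparatory step, strict positivity of $\pcost$ bounds the expected total issuance of any $\eps$-optimal $I^* \in \hat\cA_x^n$ for $\vn(t,x)$: combining $c(\zeta) \geq (1+\pcost)\zeta$ with $J(t,x,I^*) \geq 0$ and $\phi(y) \leq y+A$ yields $\pcost\, \E[I^*_t] \leq x + A + \mu T + \eps e^{\rho T}$, hence $\E[I^*_t] \leq C(1+x)$ for a constant $C$ depending only on the problem data.

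For the upper direction, take $I^*$ $\eps$-optimal for $\vn(t+h,x)$ and let $\tilde I \in \hat\cA_x^n$ agree with $I^*$ on $[0,t)$ and issue nothing afterward. Since $\vn(t,x) \geq J(t,x,\tilde I)$, subtracting, discarding the nonnegative residual issuance costs on $[t,t+h)$, and conditioning on $\cF_t$ via the dynamic programming principle gives
\[
    \vn(t+h,x) - \vn(t,x) \leq \eps + \E\bigl[e^{-\rho t}\1_{\{\theta^{I^*} \geq t\}}(\vn(h, X^{I^*}_t) - \phi(X^{I^*}_t))\bigr].
\]
Lemma~\ref{l.boundary} combined with $\phi \geq 0$ gives $\vn(h,y) - \phi(y) \leq \widehat m(h)$ uniformly in $y$ and $n$, so $\vn(t+h,x) - \vn(t,x) \leq \widehat m(h) + \eps$.

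For the reverse direction, take $I^*$ $\eps$-optimal for $\vn(t,x)$ and use it (extended by no issuance on $[t,t+h]$) as admissible for $\vn(t+h,x)$. Markov at $t$ replaces the continuation by $\vzero(h, X^{I^*}_t)$, yielding
\[
    \vn(t,x) - \vn(t+h,x) \leq \eps + \E\bigl[e^{-\rho t}\1_{\{\theta^{I^*} \geq t\}}(\phi(X^{I^*}_t) - \vzero(h, X^{I^*}_t))\bigr].
\]
The pointwise decomposition $\phi(y) - \vzero(h,y) \leq \phi(y)\bigl((1-e^{-\rho h}) + \P(\theta^y < h)\bigr) + m_\phi^*(h)$, together with $\phi(y) \leq y+A$ and the preparatory bound on $\E[I^*_t]$, produces a modulus of the form $(1+x)\widetilde m(h)$ from the discounting and Brownian-fluctuation contributions.

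The main technical obstacle is the killing contribution $\E\bigl[\phi(X^{I^*}_t)\P(\theta^{X^{I^*}_t} < h)\1_{\{\theta^{I^*} \geq t\}}\bigr]$, which need not vanish as $h \to 0$ since Lemma~\ref{l.zero} supplies no uniform modulus for $\vzero$ near the boundary. I would resolve this by modifying the extension so that, on the event $\{X^{I^*}_t < \zeta_*\}$ for a suitably chosen $\zeta_* = \zeta_*(h)$ with $\zeta_*^2/h \to \infty$, a precautionary issuance of size $\zeta_*$ is injected at time $t$ whenever the $\hat\cA_x^n$ budget allows it. The Gaussian exit bound $\P(\theta^y < h) \leq 2e^{-y^2/(2\sigma^2 h)}$ renders the killing contribution negligible for $y \geq \zeta_*$; the added expected cost $c(\zeta_*)\P(X^{I^*}_t < \zeta_*)$ is controlled via Markov's inequality and the bound on $\E[I^*_t]$; the event of exceeding the budget is controlled by the uniform bound $\E[\#\text{issuances}] \leq \E[\sum c(\Delta I^*_u)]/\fcost$; and Lemma~\ref{l.boundary} applied to the modified continuation $v_1(h, \cdot + \zeta_*)$ supplies the required uniform lower estimate.
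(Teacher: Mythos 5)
Your preparatory bound $\E[I^*_t]\le C(1+x)$ is correct (the $(1+\pcost)$ factor and the $I^*_t$ appearing inside $\phi(X^{I^*}_t)$ both attach to $\E[e^{-\rho t}I^*_t]$, so the difference is $\pcost\,\E[e^{-\rho t}I^*_t]$ and strict positivity of $\pcost$ suffices), and the one-sided bound $\vn(t+h,x)-\vn(t,x)\le\widehat m(h)$ via Lemma~\ref{l.boundary} is sound. You have also correctly isolated the real difficulty: the boundary/killing contribution in the reverse direction, where $\vzero$ has no uniform modulus.

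The proposed fix via precautionary issuance does not close the gap, however. The fixed cost $\fcost$ is charged in full on the event $\{X^{I^*}_t<\zeta_*\}$, so the added expected cost is at least $\fcost\,\P(X^{I^*}_t<\zeta_*,\,\theta^{I^*}\ge t)$, and for this to be a modulus in $h$ one needs $\P(X^{I^*}_t<\zeta_*(h))\to 0$ uniformly in $t\in[0,T-h]$, $n$, and the $\eps$-optimal control $I^*$. Markov's inequality and $\E[I^*_t]\le C(1+x)$ give no such control: Markov bounds upper tails, not the probability of being small, and in fact the conditional law of $X^{I^*}_t$ need not have a bounded density near $0$ (when the last issuance of $I^*$ occurs shortly before $t$, $X^{I^*}_t$ concentrates near the issuance target, and the density blows up as the issuance-to-$t$ gap shrinks). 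The budget issue is also not resolved: if $I^*\in\hat\cA_x^n$ already uses $n$ issuances before $t$, the precautionary jump produces a control in $\hat\cA_x^{n+1}$, giving only a bound for $v_{n+1}(t+h,\cdot)$; the heuristic bound $\E[\#\text{issuances}]\le\E[\sum c(\Delta I^*_u)]/\fcost$ controls the average count, not the event of hitting the cap, which is what one would need and which has no modulus in $h$. So as written this direction does not yield a uniform-in-$n$ modulus.

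The paper avoids the obstacle entirely by working from the Feynman--Kac representation \eqref{e.fk} with stopping at $\tau=\theta^x\wedge t$, splitting the difference into an interior term $A_n$ (handled exactly as you do, via Lemma~\ref{l.boundary}) and a boundary term $B_n=|\vn(t-\theta^x,0)-\vn(t+h-\theta^x,0)|$. The crucial observation is that $\vn(\cdot,0)=\cI(\vn[n-1](\cdot,\cdot))(0)^+$ and, because $\pcost,\fcost>0$ make the effective issuance search bounded to $[0,\zeta^*]$ uniformly in $n$, the boundary value of $\vn$ depends only on $\vn[n-1]$ restricted to the compact set $[0,\zeta^*]$. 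Defining the restricted time-modulus $\tilde m_n(h)=\sup_{t,x\in[0,\zeta^*]}|\vn(t,x)-\vn(t+h,x)|$, one gets $B_n\le\tilde m_{n-1}(h)$ and a recursion $\tilde m_n(h)\le\max\{\widehat m(h)+c(1+c^*)\rho h,\ \tilde m_{n-1}(h)\}$, which by induction stays bounded by a single modulus $m_B(h)$. This inductive transfer of the boundary estimate from $\vn$ to $\vn[n-1]$ on a fixed compact is the idea your proof is missing, and it sidesteps both the density issue and the budget issue altogether.
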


\begin{proof}
As before, set
$\vbn(t,x):= e^{\rho t}\vn(t,x)$.

{\em{Step 1.}} Since $\vn \in \cC^T_A$,
there is a constant $c_0\ge1 $, independent of $n$,
such that
$0 \le \vn(t,x) \le c_0 (1+x)$.
Moreover, because $\pcost, \fcost>0$ and $\vn \in \cC^T_A$,
there is $\zeta^*>0 $ again independent of $n$,
such that for every $(t,x) \in \overline{Q}$,
\[
\cI(\vn(t,\cdot)(x)=\sup_{\zeta \in [0,\zeta^*]} \vn(t,x+\zeta) -c(\zeta).
\]

{\em{Step 2.}}   Fix $(t,x) \in \overline{Q}$, $h>0$ and
set $\tau:=\theta^x \wedge t$.
By Feynman--Kac formula, for any $u \geq t$,
$\vn(u,x) = \E_x[ e^{-\rho \tau} \vn(u-\tau,X^x_\tau)]$.
We use this identity with the choices $u=t$
and $u=t+h$,
which implies that
\[
\vn(t,x)-\vn(t+h,x)
=\E_x[ e^{-\rho \tau}( \vn(t-\tau,X^x_{\tau})
-\vn(t+h-\tau,X^x_{\tau}))].
\]
Separating into the two cases $\tau=t$
and $\tau=\theta^x<t$ and dropping the exponential factor, we obtain
\begin{equation}%
    \label{eqn:vmodulusAB}
\left|\vn(t,x)-\vn(t+h,x) \right|
\le  \E_x{\left [ A_n \1_{\{\theta^x \ge t\}} \\
+  B_n \1_{\{\theta^x \le t\}} \right]},
\end{equation}
where
\[
A_n := \left|\phi(X^x_t) -\vn(h,X^x_t)\right|,\quad
B_n:=  {\left|\vn(t-\theta^x,0)-\vn(t+h-\theta^x,0)\right|}.
\]

{\em{Step 3.}}  Let $c_0$ be as in Step 1
and set $c:= c_0 e^{\rho T}$.  Then
by Lemma \ref{l.boundary},
\[
A_n =  \left| [\phi(X^x_t)
-\vbn(h,X^x_t)] +
(e^{\rho h }-1) \vn(h,X^x_t) \right|
\le  \widehat{m}(h)+ c (1+X^x_t) \rho h.
\]
Hence, for any $t \in [0,T-h], x \ge 0$,
\begin{equation}
\label{e.A}
\E_x [ A_n \1_{\{\theta^x \ge t\}}]
\le
(\widehat{m}(h)+ c \rho h ) \P(\theta^x \ge t) +
c  \rho h\,  \E_x [ X^x_t  \1_{\{\theta^x \ge t\}}].
\end{equation}

{\em{Step 4.}}
We now establish a modulus bound for $B_n$.
Since $\vn$ is continuous,
\[
\tilde{m}_n(h):= \sup_{t \in [0,T-h], x \in [0,\zeta^*]}
\left| \vn(t,x)-\vn(t+h,x)\right|
\]
is a modulus for $n \geq 1$.
Then, for $n>1$,
in view of  the boundary condition \eqref{e.boundary},
\[
B_n \le \sup_{\zeta \in [0,\zeta^*]} \left| v_{n-1}(t-\theta^x,\zeta)-
v_{n-1}(t+h-\theta^x,\zeta) \right|
\le \tilde{m}_{n-1}(h).
\]
We show that $\tilde{m}_n$ is uniformly bounded by a modulus.
First, there exists $c^* > 0$ depending on $\zeta^*$ such that
for all $t \in [0,T-h]$ and $x \in [0,\zeta^*]$ we have
$\E_x [ X^x_t  \1_{\{\theta^x \ge t\}}]
\le c^* \P(\theta^x \ge t)$.
Therefore,
\[
\E_x [ A_n \1_{\{\theta^x \ge t\}}]
\le [\widehat{m}(h)+ c  (1+c^*) \rho h  ] \P(\theta^x \ge t), \quad
\forall \, t \in [0,T-h], x \in [0,\zeta^*].
\]
The combination of \eqref{eqn:vmodulusAB} and the above two estimates implies that
\begin{align*}
\tilde{m}_n(h) & \le [\widehat{m}(h)+ c (1+c^*) \rho h ] \P(\theta^x \ge t)
+  \tilde{m}_{n-1}(h) (1-\P(\theta^x \ge t))\\
& \le \sup_{\lambda \in [0,1]}
[\widehat{m}(h)+ c (1+c^*) \rho h] \lambda
+  \tilde{m}_{n-1}(h) (1-\lambda))\\
&=  \max\{ \widehat{m}(h)+ c  (1+c^*)\rho h \, ,\,
 \tilde{m}_{n-1}(h) \}, \quad n >1.
\end{align*}
By induction, we conclude that
\begin{equation}%
    \label{eqn:modulus_B}
B_n \leq \tilde{m}_n(h) \le \max\{ \widehat{m}(h)+ c (1+c^*)\rho h \, ,\,
 \tilde{m}_{1}(h) \} \eqfed m_B(h), \quad n \ge 1.
\end{equation}

{\em{Step 5.}}
Returning to \eqref{e.A},
$\E_x [ X^x_t  \1_{\{\theta^x \ge t\}}]
\le c (1+x)$ for all $x \geq 0$.
Thus, for all $x \geq 0$,
\begin{equation}%
    \label{eqn:modulus_A}
    \begin{aligned}
        \E_x [ A_n \1_{\{\theta^x \ge t\}}] &\leq
        (\widehat{m}(h)+ c \rho h ) \P(\theta^x \ge t) +
        c  \rho h  \E_x [ X^x_t  \1_{\{\theta^x \ge t\}}]\\
        &\le  (\widehat{m}(h)+ c \rho h )  +
        c  \rho h  [c(1+x)]\\
        &\le (\widehat{m}(h)+ c(1+c) \rho h ) (1+x)
        =:
        m_A(h) (1+x).
    \end{aligned}
\end{equation}
Plugging \eqref{eqn:modulus_B} and \eqref{eqn:modulus_A} into \eqref{eqn:vmodulusAB} thus yields
\[
\left|\vn(t,x)-\vn(t+h,x) \right|
\le m_A(h) (1+x) + m_B(h).
\]
\end{proof}

We can now complete the proof of Theorem~\ref{t.cont}.

\begin{proof}[Proof of Theorem~\ref{t.cont}.]
By their definitions, $v_1(t,0)\ge 0=\vzero(t,0)$ and
$v_1(T,\cdot)=\vzero(T,\cdot)=\phi$.
As both $v_1$ and $\vzero$ solve \eqref{e.pde} and $v_1\ge\vzero$ on the boundary,
$v_1 \ge \vzero$ on $\overline{Q}$.  Suppose that $\vn \ge v_{n-1}$ on $\overline{Q}$
for some $n\ge 1$.  Then,
\[
v_{n+1}(t,0) = \cI(\vn(t,\cdot))(0)^+\ge  \cI(v_{n-1}(t,\cdot))(0)^+=\vn(t,0), \quad t \in [0,T].
\]
We now argue as in the case of $v_1$
to conclude that $v_{n+1} \ge \vn$ on $\overline{Q}$.
By induction, we conclude that $\vn$ is increasing in $n$.
Suppose that the sequence $\{\vn\}_n$
is uniformly locally continuous. Then, $\{\vn\}_n$ converges locally uniformly to $v \in \ccont$.
As $\{\vn\}_n$ all solve \eqref{e.pde}, so does $v$, and by parabolic regularity,
$v \in \cinfty$. Moreover, local uniform convergence of $\{\vn\}_n$ implies that
$v \in \ccont \cap \cC^T_A$.  Since $\vn(t,x) \ge \cI(v_{n-1}(t,\cdot))(x)$ for every $(t,x)\in \overline{Q}$,
we conclude that $v(t,x) \ge \cI(v(t,\cdot))(x)$ as well. The regularity of $v$ together
with \eqref{e.pde} and the boundary conditions allow us to prove by
standard verification arguments that $v$ is the value function.
Hence, it suffices the sequence $\{\vn\}_n$
is locally uniformly  continuous.  In the remainder
of this proof we establish this property.

By Proposition~\ref{p.time},
$\vn(t,0)=\cI(v_{n-1}(t,\cdot))(0)^+$ is uniformly continuous, i.e.,
$|\vn(t,0)-\vn(t+h,0)| \le \tilde{m}(h)$.
Fix $(t,x) \in \overline{Q}$. Since $\vn(0,0)=\phi(0)$,
by \eqref{e.fk},
\begin{align*}
\vn(t,h) -\vn(t,0) &=  \E_h [e^{-\rho t}
\phi(X^h_t) \1_{\{\theta^h\ge t\}}
+e^{-\rho \theta^h}
 \vn(t-\theta^h,0)\1_{\{\theta^h < t\}}] -\vn(t,0)\\
&\le   \E_h [ (\phi(X^h_{\theta^h \wedge t}) -\phi(0))
\1_{\{\theta^h\ge t\}}
+ (\vn((t-\theta^h)^+,0)-\vn(t,0))]\\
&\le \E_h[ m_\phi(h+\mu t + \sigma|W_t|) 1_{\{\theta^h\ge t\}}
+ \tilde{m}(\theta^h)].
\end{align*}
Note that
$m(h):=\sup_{t \in[0,T]} \E_h[ m_\phi(h+\mu t + \sigma|W_t|) 1_{\{\theta^h\ge t\}}]
+ \E_h[\tilde{m}(\theta^x)]$ is a modulus.
Moreover, by \eqref{e.fk},
\begin{align*}
\vn(t,x+h) -\vn(t,x) &=
\E_x \big[ \begin{aligned}[t]
        &e^{-\rho t } (\phi(X_t+h) -\phi(0,X_t)) \1_{\{ \theta^x \ge t\}}\\
        & + e^{-\rho \theta^x} (\vn(t-\theta^x,h)-\vn(t-\theta^x,0)) \1_{\{ \theta^x< t\}}
    \big]\end{aligned}\\
& \le m_\phi(h)+ m(h).
\end{align*}
This together with
Proposition \ref{p.time} imply that the sequence $\{\vn\}_n$
is locally uniformly continuous.

Finally, let $u$ be any $\ctwo \cap \ccont \cap \cC^T_A$ solution.
Then there exists a $\delta > 0$ such that in a $\delta$-neighborhood of $[0,T] \times \{ 0 \}$ it holds that $u < \fcost$.
Hence, the maximizers
\[
    \zeta^*_\cdot \in \argmax_{\zeta \geq 0} \bigl(u(\cdot, y) - u(\cdot, 0) - \fcost - (1 + \pcost) y\bigr) \1[y > 0]
\]
generate an issuance policy $I^*$ by issuing $\zeta^*_\tau$ at a time $\tau$ when the reserves reach zero.
Moreover, $\Delta I^* \geq \delta$ whenever non-zero.
Hence, the jump times of $I^*$ do not have cluster points $P$-a.s., and $I^*$ is thus an admissible control.
The usual verification arguments then show that $u$ is the value function.
\end{proof}

\subsection{Numerical results}
\label{sec:results_1D}
To compute the value function, the two operators $\dOp$ in \eqref{eqn:dOp1D} and $\cOp$ in \eqref{eqn:cOp1D} need to be implemented.
The former is straight-forward to implement, but the latter requires a bit more work.

For a model without equity issuance, $\cOp$ can for instance easily be implemented by means of Monte Carlo simulations.
This is particularly convenient for cash flow processes without diffusion, like Cramér--Lundberg model.
The reason for this is that to evaluate the indicator function in \eqref{eqn:cOp1D}, a test for ruin only has to be made at the time of a jump.
On the other hand, in a diffusion model, this has to be estimated by making increasingly smaller time steps.

Fortunately, also with equity issuance, alternative methods can be employed to solve the problem.
We use the PDE representation \eqref{e.pde}, \eqref{e.terminal},
\eqref{e.boundary} and opt for the semi-Lagrangian method presented in \cite{azimzadeh2017convergence}.
Since we wish to compute the solution on a bounded domain, an artificial boundary condition has to
also be specified.
At any time point, any additional inflow of cash at the upper boundary is paid out as dividends at the next opportunity, provided the reserves do not fall below the dividend barrier.
As the computational domain is chosen larger, it is therefore increasingly unlikely that additional cash is not paid out.
Hence, if the domain is chosen sufficiently large, the present value of $\Delta x$ at the boundary is its discounted value $e^{-\discRate (T-t)} \Delta x$.
The boundary condition $v_x(x, t) = e^{-\discRate (T-t)}$ is therefore a good approximation.

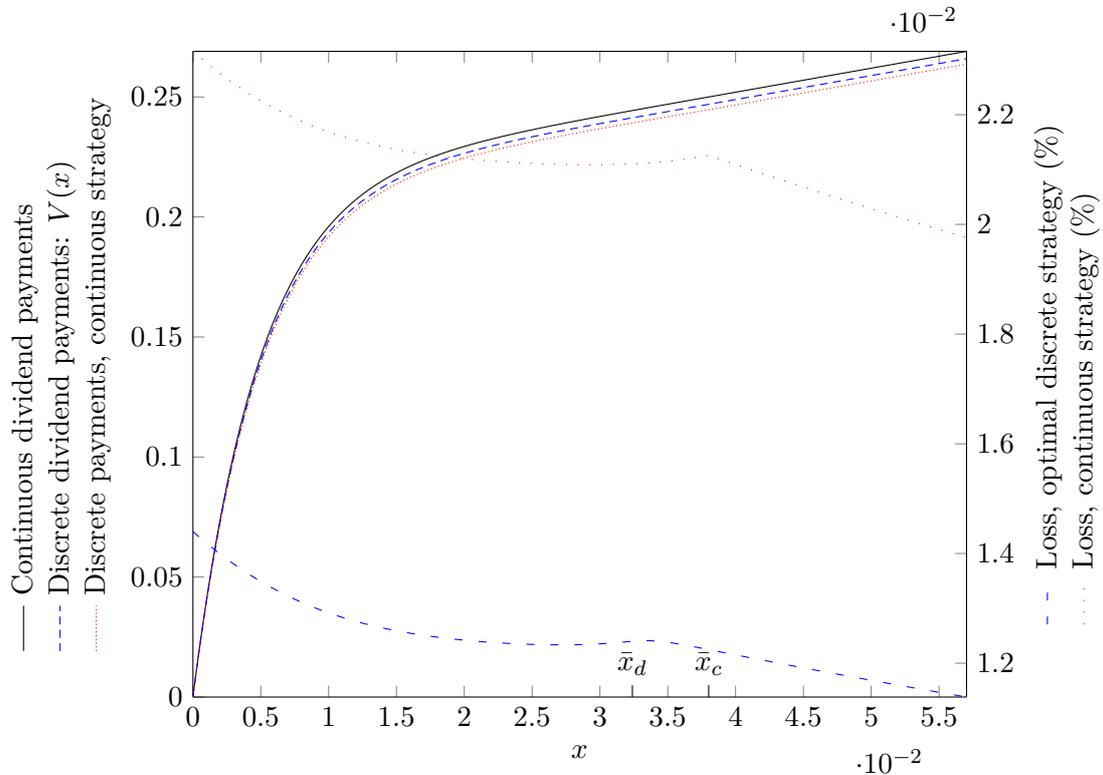
\begin{figure}
	\tikzset{external/export next=false}
	\begin{center}
		\newcommand{\axiswidth}{0.80\textwidth}
		\begin{tikzpicture}
			\begin{axis}[axis y line*=left, xlabel=$x$, ylabel style = {align=left}, ylabel={\ref{plot:noissuance1D:JBS} Continuous dividend payments \\ \ref{plot:noissuance1D:V} Discrete dividend payments: $V(x)$ \\ \ref{plot:noissuance1D:Vwrong} Discrete payments, continuous strategy}, width=\axiswidth, enlargelimits=false, no markers, legend pos=south east, legend cell align=left, yticklabel style={/pgf/number format/fixed}, legend style={at={(0.97,0.4)},anchor=south east}]
				\addplot[black] table [x=x, y=JBS, col sep=comma] {Figures/calibrated.csv};
				\label{plot:noissuance1D:JBS}
				\addplot[densely dashed, blue] table [x=x, y=V, col sep=comma] {Figures/calibrated.csv};
				\label{plot:noissuance1D:V}
				\addplot[densely dotted, red] table [x=x, y=Vwrong, col sep=comma] {Figures/calibrated.csv};
				\label{plot:noissuance1D:Vwrong}
				\draw (0.03239073944241952, 0) -- (0.03239073944241952, 0.005) node[above] {$\bar{x}_d$};
				\draw (0.03801729981504639, 0) -- (0.03801729981504639, 0.005) node[above] {$\bar{x}_c$};
			\end{axis}
			\begin{axis}[axis y line*=right, axis x line=none, ylabel style = {align=left}, ylabel={\ref{plot:noissuance1D:loss} Loss, optimal discrete strategy (\%) %
				\\ \ref{plot:noissuance1D:losswrong} Loss, continuous strategy (\%)},%
				width=\axiswidth, enlargelimits=false, no markers, legend pos=south east, legend cell align=left, yticklabel style={/pgf/number format/fixed}]
				\addplot[loosely dashed,blue] table [x=x, y=loss, col sep=comma] {Figures/calibrated.csv};
				\label{plot:noissuance1D:loss}
				\addplot[loosely dotted,red] table [x=x, y=losswrong, col sep=comma] {Figures/calibrated.csv};
				\label{plot:noissuance1D:losswrong}
			\end{axis}
		\end{tikzpicture}
	\end{center}
	\caption{\label{fig:noissuance1D}\textbf{Value functions and dividend policies without equity issuance.}
	On the left axis are plots of the value function $V$ in the discrete model (blue, dashed), the value function for the problem with continuous (singular) dividend payments (black, solid), and the value obtained from (suboptimally) using the optimal continuous strategy in the discrete model (red, dotted). On the right axis are the losses in percent due to discretization of dividend payments, relative to the continuous model. The two lines denote the losses using the optimal discrete strategy as well as the (suboptimal) continuous strategy. The cash flow is given by $C_t = \mu t + \xsigma \xW_t$ and the parameters values are $\discRate=0.04$, $T=1$, $\sigma=0.01$, and $\mu=0.01$. The values $\bar{x}_d$ and $\bar{x}_c$ at the bottom are the dividend barriers in the discrete and continuous models respectively. The difference corresponds to 14\% lower reserves in the discrete model.}
\end{figure}

With means for calculating both operators $\dOp$ and $\cOp$, we may proceed to iteratively apply $\iterOp$ to any arbitrary initial function.
Our choice of parameters for the computations in this section comes from \cite{peura_optimal_2006} and are listen in Figure~\ref{fig:noissuance1D}.
For our purposes, we consider all parameters to be in fractions of their so-called regulatory risk-weighted assets.

The results without equity issuance is presented in Figure~\ref{fig:noissuance1D}.
The loss due to discretization of dividend payments quickly falls to a level below 1.4\%.
The primary impression of the result is that the loss is relatively small.
Note that for larger values of $x$, the absolute loss stays constant, so the relative loss decays as the value function increases linearly.
Although the change in the value function is not very large, the dividend barrier moves considerably, decreasing by 14\% (slightly more than 0.005 units) in Figure~\ref{fig:noissuance1D}.
We attribute this mainly to paying out some of the expected income during the next period.
Note, however, that due to the need of keeping a buffer, only a bit more than half of the expected cash flow is paid out in advance.

One important aspect of discretization of dividends is that the use of the continuous time optimal dividend threshold in the discrete problem induces further losses, since it is no longer optimal in the discrete model.
To shed some light on the effect of using the wrong policy in this way, Figure~\ref{fig:noissuance1D} also shows the value function resulting from using the continuous time dividend barrier of the discrete dividend payments (the smallest of the three functions).
We also plot the relative losses in comparison to the continuous dividend model.
For the parameters in the figure, we observe that using the wrong policy adds a bit more than 0.8 percentage points to the losses.

\begin{figure}
	\tikzset{external/export next=false}
	\begin{center}
		\newlength{\axiswidth}
		\setlength{\axiswidth}{0.38\textwidth}
		\newlength{\axisheight}
		\setlength{\axisheight}{1.5\axiswidth}
		\begin{tikzpicture}
			\pgfplotsset{every axis/.append style={height=\axisheight, width=\axiswidth,}}
			\matrix[column sep=1em]{
				\begin{axis}[axis y line*=left, xlabel=$\mu$, ylabel={\ref{plot:mu:xbarchange} Change of strategy (\%): $\frac{\bar{x}_d - \bar{x}_c}{\bar{x}_c}$}, enlargelimits=false, no markers, yticklabel style={/pgf/number format/fixed}]
				\addplot[red] table [x=mu, y=xbarchange, col sep=comma] {Figures/noissuancelossMu.csv};
				\label{plot:mu:xbarchange}
			\end{axis}
			\begin{axis}[axis y line*=right, axis x line=none, ylabel={\ref{plot:mu:loss} Loss (\%)},%
				enlargelimits=false, no markers, legend pos=south east, legend cell align=left, yticklabel style={/pgf/number format/fixed}, legend cell align=left, legend style={at={(0.15,0.97)},anchor=north west}]
				\addplot[dashed,black] table [x=mu, y=loss, col sep=comma] {Figures/noissuancelossMu.csv};
				\label{plot:mu:loss}
			\end{axis}
			&
			\begin{axis}[axis y line*=left, xlabel=$\sigma$, ylabel={\ref{plot:sigma:xbarchange} Change of strategy (\%): $\frac{\bar{x}_d - \bar{x}_c}{\bar{x}_c}$}, enlargelimits=false, no markers, yticklabel style={/pgf/number format/fixed}]
				\addplot[red] table [x=sigma, y=xbarchange, col sep=comma] {Figures/noissuancelossSigma.csv};
				\label{plot:sigma:xbarchange}
			\end{axis}
			\begin{axis}[axis y line*=right, axis x line=none, ylabel={\ref{plot:sigma:loss} Loss (\%)},%
				enlargelimits=false, no markers, legend pos=south east, legend cell align=left, yticklabel style={/pgf/number format/fixed}, legend cell align=left, legend style={at={(0.15,0.97)},anchor=north west}]
				\addplot[dashed,black] table [x=sigma, y=loss, col sep=comma] {Figures/noissuancelossSigma.csv};
				\label{plot:sigma:loss}
			\end{axis}
			\\
		};
		\end{tikzpicture}
	\end{center}
	\caption{\label{fig:musigma} \textbf{Effect of the parameters $\mu$ and $\sigma$ without issuance.}
		The change in strategy in terms of the relative distance between the continuous dividend strategy and the discrete one. The loss is evaluated at the optimal dividend barrier for the continuous problem. The fixed parameters are the same as in Figure~\ref{fig:noissuance1D}.}
\end{figure}

\begin{figure}
	\begin{center}
		\tikzset{external/export next=false}
		\begin{tikzpicture}[scale=1, transform shape]
			\setlength{\axiswidth}{0.6\textwidth}
			\begin{axis}[axis y line*=left, xlabel=$T$, ylabel={\ref{plot:T:xbarchange} Change of strategy (\%): $\frac{\bar{x}_d - \bar{x}_c}{\bar{x}_c}$}, width=\axiswidth, enlargelimits=false, no markers, yticklabel style={/pgf/number format/fixed}]
				\addplot[red] table [x=T, y=xbarchange, col sep=comma] {Figures/noissuancelossT.csv};
				\label{plot:T:xbarchange}
			\end{axis}
			\begin{axis}[axis y line*=right, axis x line=none, ylabel={\ref{plot:T:loss} Loss (\%)},%
				width=\axiswidth, enlargelimits=false, no markers, legend pos=south east, legend cell align=left, yticklabel style={/pgf/number format/fixed}, legend cell align=left, legend style={at={(0.15,0.97)},anchor=north west}]
				\addplot[dashed,black] table [x=T, y=loss, col sep=comma] {Figures/noissuancelossT.csv};
				\label{plot:T:loss}
			\end{axis}
		\end{tikzpicture}
	\end{center}
	\caption{\label{fig:T} \textbf{Effect of the parameter $T$ without issuance.}
		The change in strategy measures the relative distance between the continuous dividend strategy and the discrete one. The loss is evaluated at the optimal dividend barrier for the continuous problem. The fixed parameters are the same as in Figure~\ref{fig:noissuance1D}.}
\end{figure}

Figures \ref{fig:musigma} and \ref{fig:T} both show the effect of varying some of the parameters.
The loss comparisons are all made at the optimal barrier of the continuous continuous model, $\bar{x}_c$.
The rationale for this is that it is the level of reserves of a healthy firm.
Changes to $\mu$ and $\sigma$ that increase the value of the continuous model also raises the \emph{relative} loss from discretizing the dividend strategy.
We also observe that for all parameter values in the ranges considered, there is a significant shift in the optimal strategy.
In Figure~\ref{fig:T} we solve the problem for different values of $T$.
This is the only parameter that does not affect the continuous time problem.
As expected, the size of $T$ has a strong impact on the losses.
The figure suggests that the loss in the value function is low for quarterly dividend payments, but still the dividend strategy is quite different.

\begin{figure}
	\begin{center}
		\begin{tikzpicture}
	\pgfplotsset{every axis/.append style={width=0.5\textwidth, colormap/viridis}}
	\matrix[row sep=2em]{
	\begin{axis}[%
		view={-46}{26},
		xmajorgrids,
		ymajorgrids,
		zmajorgrids,
		enlargelimits=false,
		xtick={0,0.025,0.05},
		xlabel=$x$,
		ylabel=$t$,
		every x tick scale label/.style={
			at={(xticklabel cs:0.8,0pt)},
			anchor=near xticklabel,inner sep=0pt},
	  ]

	  \addplot3[%
		  surf,
		  shader=interp,
		  samples=120,
		  mesh/cols=51]
	  table[col sep=comma] {Figures/surf.csv};
	  \addplot3[draw=white!100] table[col sep=comma] {Figures/issuance1D.csv};

	\end{axis}
	&
	\begin{axis}[%
		view={0}{90},
		enlargelimits=false,
		xtick={0,0.025,0.05},
		xlabel=$x$,
		ylabel=$t$,
		every x tick scale label/.style={
			at={(xticklabel cs:0.7,-5pt)},
			anchor=near xticklabel,inner sep=0pt},
	  ]

	  \addplot3[%
		  surf,
		  shader=interp,
		  samples=120,
		  mesh/cols=51]
	  table[col sep=comma] {Figures/surf.csv};
	  \addplot3[draw=white!100] table[col sep=comma] {Figures/issuance1D.csv};

	\end{axis}
\\};
\end{tikzpicture}%
		\vspace*{-2.5em}
		\caption{\label{fig:issuance1D} \textbf{Surface plots of the value function $v(x, t)$ with issuance, and the optimal issuance target for $t \in [0,1)$.}
			Issuance only occurs at the boundary $x=0$, and the issuance target is presented as the white line on the surface. The issuance costs are $\pcost = 0$, $\fcost = 0.0025$, and the remaining parameters are the same as in Figure~\ref{fig:noissuance1D}.}
	\end{center}
\end{figure}
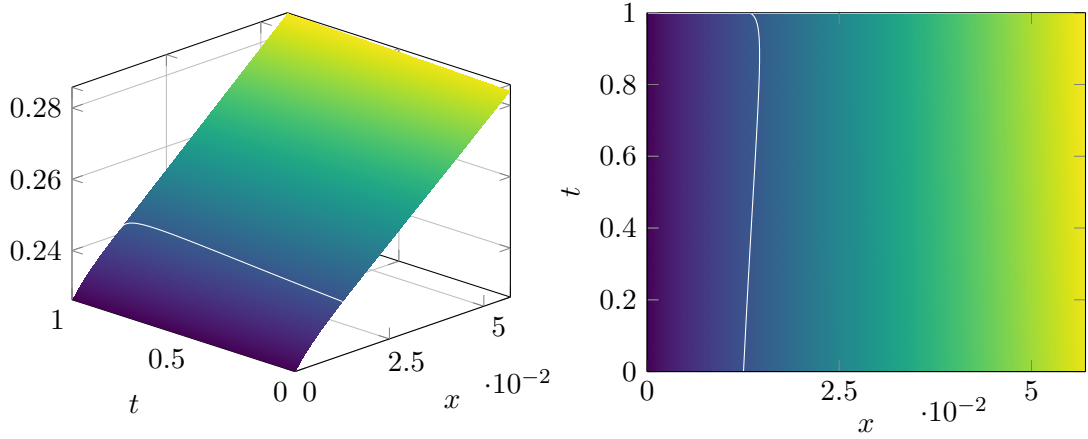

Figure~\ref{fig:issuance1D} shows the value function for the model with issuance.
As expected for issuance costs independent of $t$ and $x$, issuance only occurs at the boundary.
Note that since $\pcost=0$, the optimal issuance target\footnote{Because $\pcost = 0$, the target is not unique at time points coinciding with dividend payments, since excessive issuance can be offset by dividend payments at no cost. At these points we consider the optimal issuance target to be the smallest optimizer.} coincides with the dividend barrier, in this case roughly 0.0125.
We observe that the size of issued equity grows as time passes, with the exception of the period right before the time of dividend payment where it drops to its initial value.

\section{Discrete dividends with random profitability}
\label{sec:dividends2D}
Instead of the constant drift considered in Section~\ref{sec:results_1D}, one could consider the drift---the \emph{profitability}---to be described by another random process.
Suppose that the cash flow $\dif{C_t} = \mu_t \dif{t} + \xsigma \dif{\xW_t}$
depends on some profitability process
$\process{\mu}$.
The net cash reserves $X = \process{X}$ depends on the initial
cash reserve $x \ge 0$, the initial profitability $\mu \in \R$ and
the control process $(I,L)$.  With abuse of notation
we use $\nu=(x,\mu,L,I)$ to denote these dependences
and
\[
X^\nu_t = x + C^\mu_t -L_t + I_t, \quad
C^\mu_u=\int_0^t \mu_u \dif{u} +\xsigma \xW_t.
\]
Let $\filtration = \process{\filtration}$ be the filtration generated by $(C^\mu, \mu)$.
Again, we restrict dividends to be fully covered by reserves of the firm,
i.e., $\Delta L_t \leq X^{L,I}_{t-}$, and the ruin time is given by
$\ruinTime^\nu = \inf \{ t > 0 : X^\nu_t < 0 \}$.

Just as before, the aim of the firm is to maximize
the discounted value of dividends net of equity issuance
and the value function $V(x,\mu)$ and $J(\nu)$
are given as before.  The main difference
is the dependence on the initial profitability.

\subsection{Periodization and numerical convergence}
We define the operators $\cOp$, $\dOp$ and $\iterOp$ exactly as before.
In the proof of Theorem~\ref{thm:convergence_1D}, the equity
issuance poses no extra obstacle.
However, for the sake of simplifying the exposition,
we present the results without equity issuance.
In particular,
we do not prove the regularity of the value function
as we did in the previous section.  Instead
we work within the class of universally measurable functions.

We make the following assumption on $C^\mu$ and $\process{\mu}$
and it  ensures that the effect of random profitability is sufficiently well behaved.
In particular, it restricts the profitability process from having too strong growth.
\begin{assumption}
    \label{ass:alpha}
    There exists an $\alpha : \mathbb{R} \to [1, \infty)$ so that for all $\mu$ we have
    \begin{enumerate}
		\item $\E_{x,\mu} [ (x + C_{T-}^\mu)^+ ] \leq x + A \alpha(\mu)$, for some $A \geq 0$;
		\item $\E_{x,\mu} [ \alpha(\mu_{T-}) ] \leq e^{\discRate T /2} \alpha(\mu)$.
    \end{enumerate}
\end{assumption}
Now we can give the following result.
\begin{theorem}
	There exists a metric space $(\mathcal{X}_\alpha, \alphaMetric)$ such that the
	operator $\iterOp$ maps $\mathcal{X}_\alpha$ into itself and is a strict contraction. Moreover,
	the value function is the unique fixed-point  of $\iterOp$.
\end{theorem}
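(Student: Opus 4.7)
The plan is to adapt the 1D argument (Theorem~\ref{thm:convergence_1D} and Theorem~\ref{t.verification}) by introducing an $\alpha$-weighted function space that controls the unbounded $\mu$-dependence. Specifically, fix $B \ge A/(e^{\discRate T/2}-1)$ and let $\mathcal{X}_\alpha$ denote the set of universally measurable functions $\phi : \Rnneg \times \R \to \Rnneg$ with $\phi(0,\mu) = (\cI(\phi)(0,\mu))^+$, with $x \mapsto \phi(x,\mu)-x$ non-decreasing, and satisfying the linear growth bound
\[
0 \le \phi(x,\mu) \le x + B\,\alpha(\mu), \qquad (x,\mu) \in \Rnneg \times \R.
\]
Equip $\mathcal{X}_\alpha$ with the weighted distance
\[
\alphaMetric(\phi, \psi) \eqdef \sup_{(x,\mu) \in \Rnneg \times \R}
\frac{|\phi(x,\mu) - \psi(x,\mu)|}{\alpha(\mu)}.
\]
Since $\alpha \ge 1$ and the growth bound makes this supremum finite, $(\mathcal{X}_\alpha, \alphaMetric)$ is a complete metric space (as a closed subset of the Banach space of functions with finite $\alpha$-weighted sup norm).

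Next I would verify the stability $\iterOp(\mathcal{X}_\alpha) \subset \mathcal{X}_\alpha$. The argument that $\dOp$ preserves the structural conditions (monotonicity of $\phi - x$, boundary identity \eqref{e.1}) transfers verbatim from Step~1 of the proof of Theorem~\ref{thm:convergence_1D}, since $\mu$ plays no role for $\dOp$. For $\cOp$, dropping the discounted terminal condition and using Assumption~\ref{ass:alpha} gives
\[
\cOp\phi(x,\mu) \le e^{-\discRate T}\, \E_{x,\mu}\!\bigl[(x + C^\mu_{T-})^+ + B\,\alpha(\mu_{T-})\bigr]
\le e^{-\discRate T}(x + A\alpha(\mu)) + e^{-\discRate T/2} B\,\alpha(\mu),
\]
so that composing with $\dOp$ (which preserves an affine bound in $x$ with slope $\le 1$) yields $\dOp\cOp\phi(x,\mu) \le x + (e^{-\discRate T}A + e^{-\discRate T/2}B)\alpha(\mu) \le x + B\alpha(\mu)$ by the choice of $B$.

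The contraction property is the heart of the argument. For $\phi, \psi \in \mathcal{X}_\alpha$, Assumption~\ref{ass:alpha}(ii) gives
\[
|\cOp\phi(x,\mu) - \cOp\psi(x,\mu)|
\le e^{-\discRate T}\,\E_{x,\mu}\!\bigl[\alphaMetric(\phi,\psi)\,\alpha(\mu_{T-})\bigr]
\le e^{-\discRate T/2}\,\alphaMetric(\phi,\psi)\,\alpha(\mu).
\]
Since $\dOp$ is nonexpansive in the $\alphaMetric$-metric (the supremum over $\ell$ does not touch $\mu$), we deduce $\alphaMetric(\iterOp\phi, \iterOp\psi) \le e^{-\discRate T/2}\,\alphaMetric(\phi,\psi)$, and Banach's fixed point theorem yields a unique $\Phi \in \mathcal{X}_\alpha$ with $\iterOp\Phi = \Phi$.

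Finally, the verification step follows the scheme of Theorem~\ref{t.verification}. One iterates the DTDPP to express $V$ as
\[
V(x,\mu) = \sup_{(L,I)}\E_{x,\mu}\!\Bigl[\sum_{n=0}^{N-1} e^{-\discRate nT}\Delta L_{nT}
- \sum_{0\le t \le (N-1)T} e^{-\discRate t}c(\Delta I_t)
+ e^{-\discRate NT} V(X^\nu_{NT},\mu_{NT}) \1_{\{\theta^\nu > NT\}}\Bigr],
\]
and sends $N \to \infty$. The main obstacle is showing that $V \in \mathcal{X}_\alpha$ so that the terminal term vanishes: this requires an \emph{a priori} upper bound of the form $V(x,\mu) \le x + B\alpha(\mu)$ uniformly in admissible strategies, which is obtained by bounding $V(x,\mu) \le x + \E_{x,\mu}\!\int_0^\infty e^{-\discRate t}\dif C^\mu_t$ and iterating Assumption~\ref{ass:alpha}(i)--(ii) across the intervals $[nT,(n+1)T]$ to produce a geometric series in $e^{-\discRate T/2}$. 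Together with monotonicity in $x$ and the boundary identity inherited from universal measurability of $V$ plus the DTDPP, this places $V$ in $\mathcal{X}_\alpha$; by uniqueness of the fixed point, $V = \Phi$.
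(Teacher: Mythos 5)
Your overall approach matches the paper's: you use the same $\alpha$-weighted sup metric $\alphaMetric$, the same linear growth corridor $x \le \phi(x,\mu) \le x + \text{const}\cdot\alpha(\mu)$, and the same contraction factor $e^{-\discRate T/2}$ coming from Assumption~\ref{ass:alpha}(ii). The stability estimate, the contraction estimate, and the verification via iterated DTDPP all mirror the paper's proof. (Minor note: the threshold $B \ge A/(e^{\discRate T/2}-1)$ is larger than the minimal one needed, but that is harmless.)

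There is, however, a genuine gap in the definition of $\mathcal{X}_\alpha$. You require both the boundary identity $\phi(0,\mu) = (\cI(\phi)(0,\mu))^+$ and the monotonicity of $x \mapsto \phi(x,\mu)-x$, and then claim that preservation of these under $\iterOp$ ``transfers verbatim from Step~1 of the proof of Theorem~\ref{thm:convergence_1D}.'' That step only shows $\dOp : \wcc_A \to \cC_A$. Since $\iterOp = \dOp\circ\cOp$, you must first get $\cOp\phi$ into an $\wcc_A$-like intermediate space --- in particular that $\cOp\phi$ satisfies the boundary identity --- which in the one-dimensional case hinged on the substantial regularity result Theorem~\ref{t.cont}. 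The paper explicitly declines to prove such regularity in the two-dimensional setting and works instead in the plain space of universally measurable functions with the growth bound, precisely to avoid this; its $\mathcal{X}_\alpha$ contains \emph{no} structural conditions beyond the corridor. As written, your proposal has a step whose justification would require an unproven (and nontrivial) 2D analogue of Theorem~\ref{t.cont}. The fix is simple: drop the two structural conditions from $\mathcal{X}_\alpha$; they are not used anywhere in your stability, contraction, or verification arguments, and without them the proposal coincides with the paper's proof.
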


\begin{proof}
    We prove the statements for the following subspace of universally measurable functions:
    \[
		\mathcal{X}_\alpha \eqdef \left\{ x \leq \phi(x, \mu) \leq x + A_\phi \alpha(\mu) \text{ for some } A_\phi \right\}
	\]
	with metric
	\[
		\alphaMetric( \phi, \psi ) \eqdef \sup_{x \geq 0, \mu \in \mathbb{R}} \frac{|\phi(x, \mu) - \psi(x,\mu)|}{\alpha(\mu)}.
	\]
	Note that this implies that $|\phi(x,\mu) - \psi(x,\mu)| \leq \alphaMetric( \phi, \psi ) \alpha(\mu)$.

	Then, for $\phi \in \mathcal{X}_\alpha$,
    \begin{align*}
		e^{\discRate T} \cOp \phi(x,\mu) &\leq \E_{x,\mu} [ (X_{T-} + A_\phi \alpha(\mu_{T-})) \1_{\{\theta \geq T\}}] \\
		& \leq \E_{x,\mu}[ (x + C^\mu_{T-})^+] + A_\phi \E_{x,\mu}[\alpha(\mu_{T-})] \\
	     & \leq x + A\alpha(\mu) + e^{\discRate T /2} A_\phi \alpha(\mu) \\
	     & \leq x + A' \alpha(\mu).
    \end{align*}
    Hence, $\iterOp \phi (x, \mu) \leq x + e^{-\discRate T} A' \alpha(\mu)$, so $\iterOp \phi \in \mathcal{X}_\alpha$.

    It is left to show that $\iterOp$ is a strict contraction. By the properties of $\iterOp$ and the construction of $\alphaMetric$,
    \begin{align*}
		| \iterOp \phi (x, \mu) - \iterOp \psi (x,\mu) | & \leq e^{-\discRate T} \E_{x,\mu} [ \alphaMetric(\phi, \psi) \alpha(\mu_{T-}) ] \\
	    &\leq e^{-\discRate T} e^{\discRate T /2} \alphaMetric(\psi, \phi) \alpha(\mu) \\
	    &\leq e^{-\discRate T /2} \alphaMetric(\phi, \psi) \alpha(\mu).
    \end{align*}
    This implies that $\alphaMetric(\iterOp \phi, \iterOp \psi) \leq e^{-\discRate T /2} \alphaMetric(\phi, \psi)$, showing that $\iterOp$ is indeed a strict contraction.

    	The statement about the value function is proved exactly as in the proof of Theorem~\ref{t.verification}.
\end{proof}

\begin{remark}
	\label{rem:OUassumptions}
    Assumption \ref{ass:alpha} is satisfied by $C^\mu_t = \int_0^t \mu_s \dif{s} + \xsigma \xW_t$, where $\mu$ is the Ornstein--Uhlenbeck processes
    \[  \dif{\mu_t} = k(\mumean - \mu_t) \dif{t} + \musigma \dif{\muW}_t, \]
	where $k$, $\mumean$, and $\musigma$ are positive constants.
    By the time-scaled representation of Ornstein--Uhlenbeck processes, for $t \in [0,1]$ we have
    \begin{align*}
		\E_{x,\mu} [( \mu_t )^+ ] &= \E_{x,\mu} \left[ \left( \mu e^{-kt} + \mumean (1 - e^{-kt}) + \frac{\musigma}{\sqrt{2k}} e^{-kt} \muW_{e^{2kt}-1} \right)^+ \right] \\
		 &\leq \mu^+ + \mumean + \frac{\musigma}{\sqrt{2k}} \E_{x,\mu} \left[ \sup_{t \in [0,e^{2k}-1]} \muW_t \right] \\
	    &= \mu^+ + \mumean + \musigma \sqrt{\frac{e^{2k}-1}{k\pi}}.
    \end{align*}
	Hence, $\alpha(\mu) = \mu^+ + A$ satisfies the second condition of Assumption~\ref{ass:alpha} for any
	\[A \geq \frac{\mumean + \sigma \sqrt{\frac{e^{2k}-1}{k\pi}}}{e^{\discRate T / 2} - 1} \vee 1.\]
    However, this estimate is also sufficient for the first condition, since
	\[  \E_{x,\mu} \left[ \big(x + \int_0^1 \mu_t \dif{t} + \sigma \xW_t\big)^+ \right] \leq x^+ + \int_0^1 \E_{x,\mu} [( \mu_t )^+] \dif{t} + \sigma \sqrt{\frac{2}{\pi}}. \]
	We therefore conclude that the conditions of Assumption~\ref{ass:alpha} are satisfied for this choice of $C^\mu$ and $\mu$.
\end{remark}

\subsection{Numerical results}
\label{sec:results_2D}
Assuming the dynamic programming principle holds, it follows that the value function solves
\begin{equation}
	\label{eqn:continuousPDE2D}
	\min \left\{ -(\partial_t + \cashflowGenerator - \discRate) v(t,x,\mu), \quad v(t,x,\mu) - \sup_{i \geq 0} (v(t,x+i, \mu) - (1 + \pcost) i - \fcost ) \right\} = 0,
\end{equation}
with the boundary condition $v(t, 0, \mu) = 0$ in the viscosity sense, i.e.,
\begin{equation}
	\label{eqn:spaceBoundary2D}
	v(t, 0, \mu) = \max\{0, \,\, (\partial_t + \cashflowGenerator + 1 - \discRate) v(t,0,\mu), \,\, \sup_{i \geq 0} (v(t, i, \mu) - (1 + \pcost) i - \fcost\}.
\end{equation}
Like in the one-dimensional case, we will employ this PDE formulation for the numerical solution of the problem.

We solve the model for $\dif{C^\mu_t} = \mu_t \dif{t} + \xsigma \dif{\xW_t}$, where $\mu$ is an Ornstein--Uhlenbeck process.\footnote{Recall from Remark~\ref{rem:OUassumptions} that this class of processes satisfies Assumption~\ref{ass:alpha} required for numerical convergence.}
This model was explored for continuous dividend payments in \cite{reppen2017dividends}.
Also in this case, we opt for a semi-Lagrangian scheme, and for the same reason as in the one-dimensional model, we place the same boundary condition $v_x = e^{-\discRate (T-t)}$ on the upper boundary in the $x$-dimension.
In the $\mu$-dimension boundary conditions also have to be set.
For the sake of our calculations, we mirror the process $\mu$ at the boundary.
There are better choices, but we expect it to have a relatively small impact due to the Ornstein--Uhlenbeck process' strong inward drift at the boundary.\footnote{The results are consistent with disregarding the diffusion at the $\mu$-boundary. In fact, disregarding the diffusion at the lower boundary and mirroring at the upper seems to allow the smallest domain without impacting the free boundaries, i.e., retaining stability with respect to choosing a larger domain.}

\begin{figure}
	\centering
	\pgfplotsset{every axis/.append style={width=1.0\textwidth, colormap/viridis}}
	\begin{subfigure}{0.5\linewidth}
		\begin{tikzpicture}
			\coordinate (O) at (0,0);
			\begin{axis}[name=axis, ylabel=$x$, xlabel=$\mu$, enlargelimits=false, enlarge x limits={abs value=0.05,lower}, enlarge y limits={abs value=0.05,upper}, no markers, legend pos=south east, legend cell align=left,]
				\addplot[draw=none,name path=lower] table [x=mu, y=divLower, col sep=comma] {Figures/continuousnoissuance2D.csv};
				\addplot[draw=none,name path=upper] table [x=mu, y=divUpper, col sep=comma] {Figures/continuousnoissuance2D.csv};
				\addplot[fill=lightgray] fill between[of=lower and upper];
				\addplot[black] table [x=mu, y=lower, col sep=comma] {Figures/noissuance2D.csv} node[pos=0.0, inner sep=0pt] (muminLow) {};
				\addplot[black] table [x=mu, y=upper, col sep=comma] {Figures/noissuance2D.csv} node[pos=0.0, inner sep=0pt] (muminHigh) {};
				\draw let \p1 = (muminLow), \p2 = (muminHigh) in (\x1,\y1) -- (\x1, \y2);
			\end{axis}
		\end{tikzpicture}
	\end{subfigure}%
	\begin{subfigure}{0.5\linewidth}
		\begin{tikzpicture}
			\coordinate (O) at (0,0);
			\begin{axis}[name=axis, ylabel=$x$, xlabel=$\mu$, enlargelimits=false, enlarge x limits={abs value=0.05,lower}, enlarge y limits={abs value=0.05,upper}, no markers, legend pos=south east, legend cell align=left,]
				\addplot[draw=none,name path=lower] table [x=mu, y=lower, col sep=comma] {Figures/continuousissuance2D.csv};
				\addplot[draw=none,name path=upper] table [x=mu, y=upper, col sep=comma] {Figures/continuousissuance2D.csv};
				\addplot[fill=lightgray] fill between[of=lower and upper];
				\addplot[black] table [x=mu, y=lower, col sep=comma] {Figures/issuance2D.csv} node[pos=0.0, inner sep=0pt] (muminLow) {};
				\addplot[black] table [x=mu, y=upper, col sep=comma] {Figures/issuance2D.csv} node[pos=0.0, inner sep=0pt] (muminHigh) {};
				\draw let \p1 = (muminLow), \p2 = (muminHigh) in (\x1,\y1) -- (\x1, \y2);
			\end{axis}
		\end{tikzpicture}
	\end{subfigure}
	\caption{\label{fig:boundaries2D} \textbf{State space and the free boundaries of \eqref{eqn:continuousPDE2D}--\eqref{eqn:spaceBoundary2D} (black lines).}
		The left panel is without equity issuance and the right panel is with equity issuance.
		Between the two lines, it is optimal to not pay dividends, whereas outside it is.
		The gray area corresponds the same model, but allowing for continuous payments of dividends.
		The interpretation is the same, but the area between the lines is filled.
		The cash flow is given by $C^\mu_t = \int_0^t \mu_s \dif{s} + \xsigma \xW_t$, where $\dif{\mu_t} = k(\mumean - \mu_t) \dif{t} + \musigma \dif{\muW_t}$.
		Parameter values are $\fcost = 0.1$, $\pcost = 0.2$, $\rho = 0.05$, $T = 1$, $\xsigma = 0.1$, $k = 0.5$, $\mumean = 0.15$, $\musigma = 0.3$, and $\text{Cov}(\xW_t, \muW_t) = 0$.}
\end{figure}
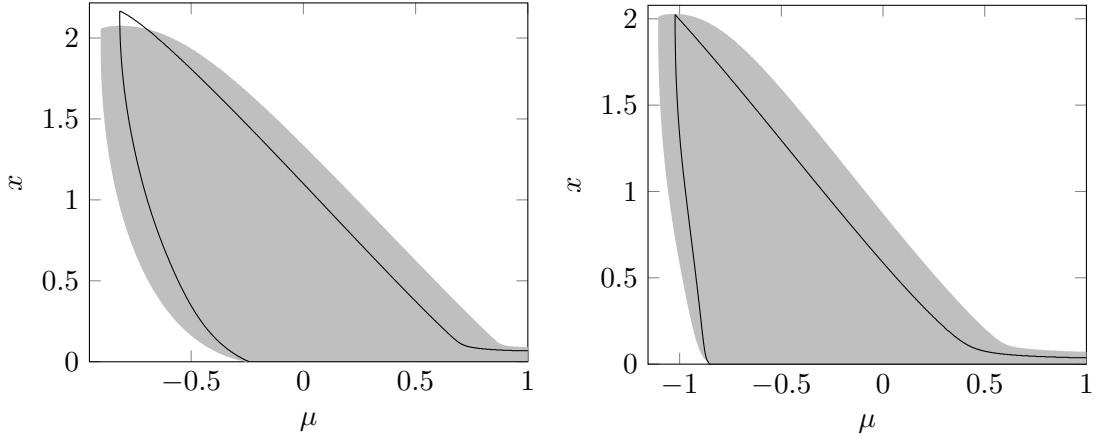

The dividend boundaries can be seen for models with and without equity issuance in Figure~\ref{fig:boundaries2D}.
As in \cite{reppen2017dividends}, we interpret the two lines constituting the dividend boundary in different ways.
The upper line has the same interpretation as the dividend barrier in the one-dimensional setting:
whenever the reserves are above it at the time of dividend payments, dividends are paid out such that the reserves move down to the line.
The lower boundary has a more subtle interpretation.
Mathematically seen, dividends are paid out whenever the reserves lie below this line.
Since the new state will still lie below the line, dividends must be paid until the reserves reach zero.
The interpretation of this is that the firm liquidates whenever the reserves dip below the line.
We will call these two lines the dividend boundary and the liquidation boundary.
For points to the left of these lines, the profitability is so low that liquidation is optimal regardless of reserves.

The general effect of dividend discretization is consistent in the two figures;
the dividend boundary moves downwards for most values of the profitability, with the exception of points close to where it meets the liquidation boundary.
Just like with constant profitability, we ascribe the lower dividend boundary to paying out profits in advance.
The liquidation boundary, on the other hand, moves upwards/inwards for all points.
This is likely due to the reduction in the prospective future value in the event of higher profitability, thus reducing today's value of not liquidating.
In both the continuous and discrete models, issuance only occurs at the boundary for the chosen parameter values.

\begin{figure}
\centering
\begin{tikzpicture}
	\pgfplotsset{every axis/.append style={axis on top,
			width=0.46\textwidth,
			enlargelimits=false,
			scaled ticks=false,
			tick label style={/pgf/number format/fixed},
			font=\small,
		}
	}
	\matrix{
		\begin{axis}[xlabel=$\mu$, ylabel=$x$]
			\addplot graphics[xmin=-0.9, ymin=0, xmax=1, ymax=2.2] {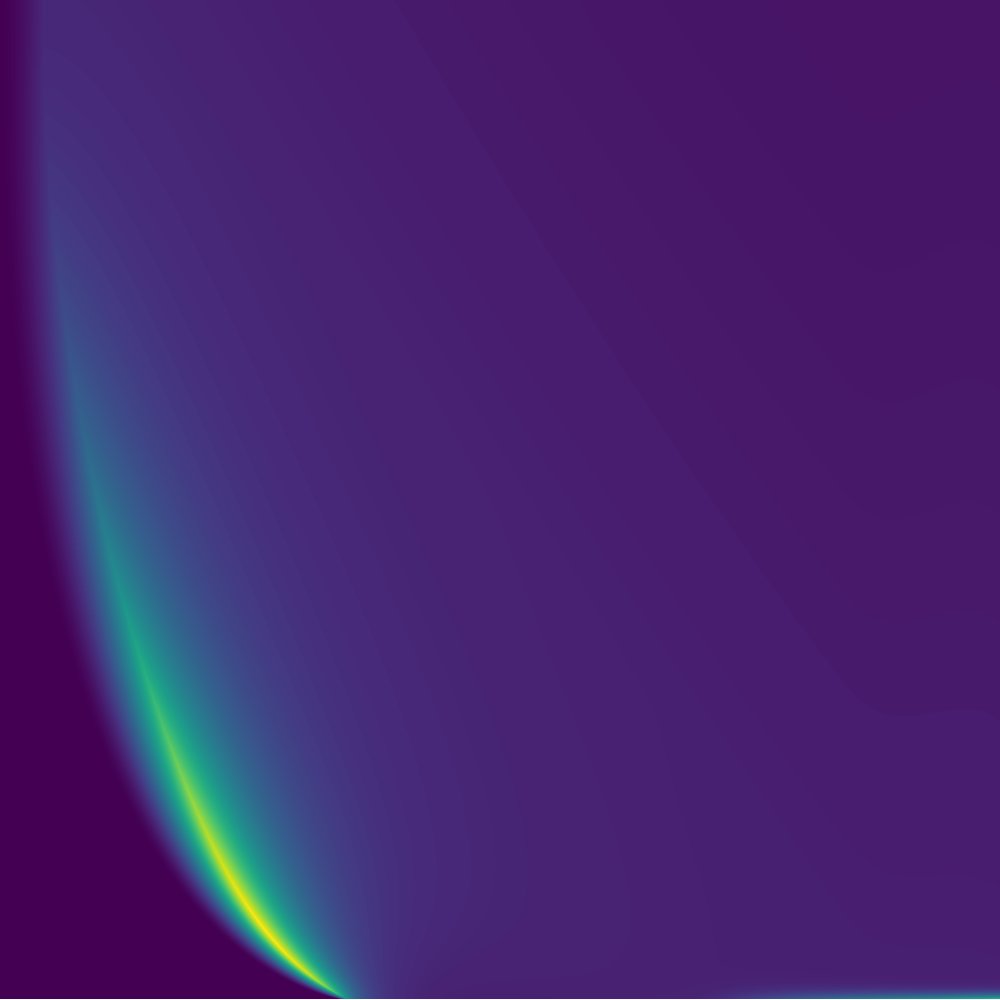};
			\addplot[white] table [x=mu, y=lower, col sep=comma] {Figures/noissuance2D.csv} node[pos=0.0, inner sep=0pt] (muminLow) {};
			\addplot[white] table [x=mu, y=upper, col sep=comma] {Figures/noissuance2D.csv} node[pos=0.0, inner sep=0pt] (muminHigh) {};
			\draw[white] let \p1 = (muminLow), \p2 = (muminHigh) in (\x1,\y1) -- (\x1, \y2);
		\end{axis}
		&
		\begin{axis}[xlabel=$\mu$, ylabel=$x$]
			\addplot graphics[xmin=-1.1, ymin=0, xmax=1, ymax=2.2] {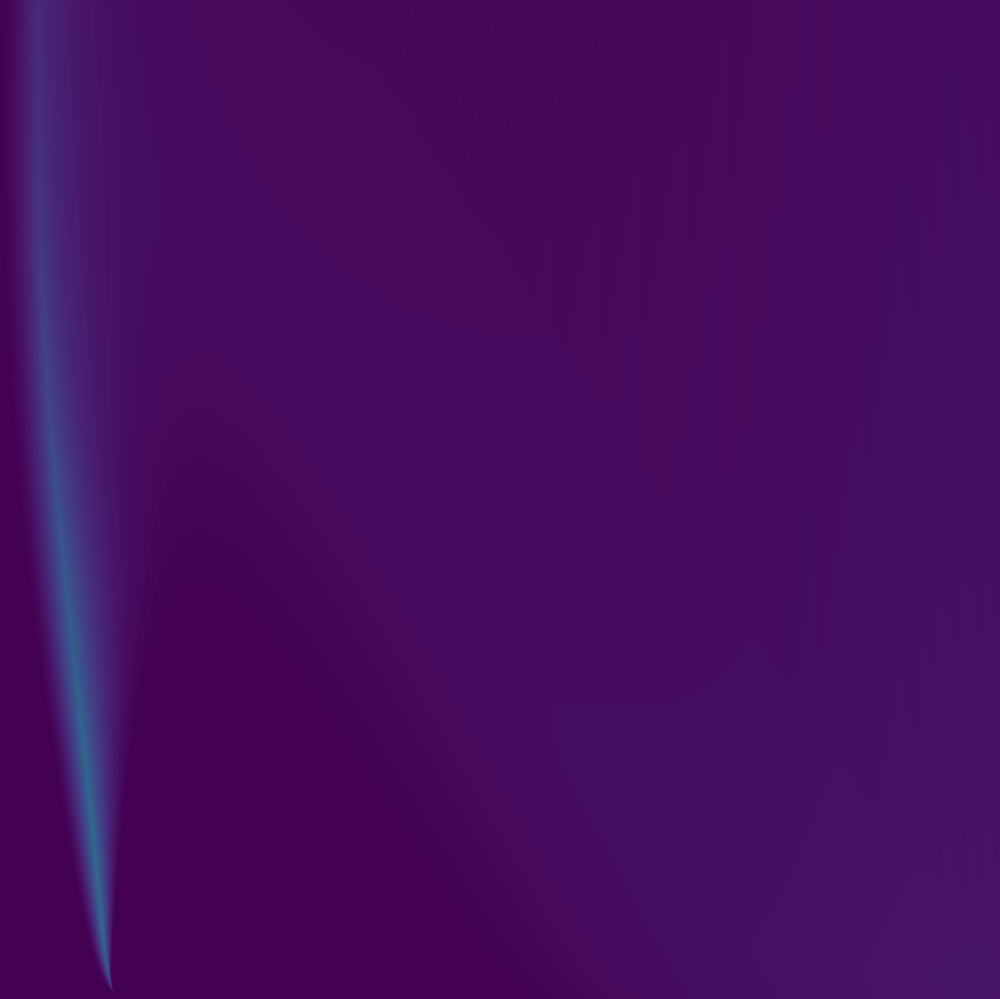};
			\addplot[white] table [x=mu, y=lower, col sep=comma] {Figures/issuance2D.csv} node[pos=0.0, inner sep=0pt] (muminLow) {};
			\addplot[white] table [x=mu, y=upper, col sep=comma] {Figures/issuance2D.csv} node[pos=0.0, inner sep=0pt] (muminHigh) {};
			\draw[white] let \p1 = (muminLow), \p2 = (muminHigh) in (\x1,\y1) -- (\x1, \y2);
		\end{axis}
		&
		\begin{axis}[axis equal image, xtick style={draw=none}, xticklabels={,,}, extra y ticks={23.616614205295264}, ylabel near ticks, yticklabel pos=right]
			\addplot graphics[xmin=0, ymin=0, xmax=2, ymax=23.616614205295264] {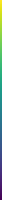};
		\end{axis}
		\\
	};
\end{tikzpicture}
\caption{\label{fig:loss2D}\textbf{Heatmap of the relative loss from discrete dividends relative to continuously paid dividends.}
	The left panel is without issuance and the right panel is with issuance.
	The scale is given in percent. Parameters are the same as in Figure~\ref{fig:boundaries2D}. The white curves constitute the dividend/liquidation boundaries for the discrete problem.}
\end{figure}

Figure~\ref{fig:loss2D} shows the relative loss of discrete dividends to continuous dividends for the various points in the state space.
The losses peak around the liquidation boundary for the discrete solution.
At these points, the losses are close to 25\% without issuance and a bit above 8\% with issuance.
For higher profitability and larger reserves, the losses soon dip below 3\% without issuance and 1\% with, decaying to 0 as $x$ increases.
In particular in the model with equity issuance, we see that the loss from discrete dividend payments is relatively small.
The average loss for all the points of the shown domain is less than 0.8\%.

\section{Concluding remarks}
\label{sec:conclusion}
For the one-dimensional dividend problem of Section~\ref{sec:dividends1D}, we find that the losses from dividend discretization are relatively low.
We have observed the same, relatively small losses also for other parameter choices, and believe that it extends to most reasonable choices in this one-dimensional setting.
In particular, for quarterly or more frequent dividends, the losses are especially small.
The overall small losses provide justification for using a continuous model as a substitute, if the goal is to find the value function/value for the cash flow.

On the other hand, the richer model presented in Section~\ref{sec:dividends2D} paints another picture.
For the parameters considered, we see that the total losses increase to almost 24\% in some parts of the state space.
This suggests that the dividend discretization can have a strong impact on the firm value.
Thus, whether the traditional continuous modelling is appropriate in a given setting is highly model-dependent.
In particular, the choice of dividends modelling has to be made on a case-by-case basis.

Finally, in the presented models, there is a pronounced shift in the optimal strategy.
This implies that using the optimal continuous dividend policy would induce further losses, as is illustrated in Figure~\ref{fig:noissuance1D}, possibly further affecting the performance loss.

\bibliographystyle{plainnat}
\bibliography{discretedividends}
\end{document}